\newtheorem{theorem}{Theorem}[section]
\newtheorem{lemma}[theorem]{Lemma}
\newtheorem{proposition}[theorem]{Proposition}
\newtheorem{corollary}[theorem]{Corollary}
\theoremstyle{definition}
\newtheorem{remark}[theorem]{Remark}
\numberwithin{equation}{section}
\newcommand{\card}{\operatorname{card}}
\begin{document}
\title{ On non-archimedean Gurari\v{i} spaces }
\author{J. K\c akol}
\address{Adam Mickiewicz University, Pozna\'n, Poland and Institute of
Mathematics, Czech Academy of Sciences, Czech Republic}
\thanks{The first named author was supported by Generalitat Valenciana,
Conselleria d'Educaci\'{o}, Cultura i Esport, Spain, Grant PROMETEO/2015/058
and by the GA\v{C}R project 16-34860L and RVO: 67985840.}
\author{W.~Kubi\'{s}}
\address{Institute of Mathematics, Czech Academy of Sciences, Czech Republic
and Cardinal Stefan Wyszy\'nski University, Warsaw, Poland}
\thanks{The second author was supported by the GA\v{C}R project 16-34860L
and RVO: 67985840.}
\author{A. Kubzdela}
\address{Institute of Civil Engineering, University of Technology, Pozna\'n,
Poland}

\keywords{Non-archimedean Banach spaces, Isometric embedding, Universal disposition}
\subjclass[2010]{46S10}
\maketitle

\begin{abstract}
Let $\mathcal{U}_{FNA}$ be the class of all non-archimedean
finite-dimensional Banach spaces. A non-archimedean Gurari\v{i} Banach space 
$\mathbb{G}$ over a non-archimedean valued field $\mathbb{K}$ is
constructed, i.e. a non-archimedean Banach space $\mathbb{G}$ of countable
type which is of \emph{almost universal disposition} for the class $\mathcal{U}
_{FNA}$. This means: for every isometry $g:X\rightarrow Y$, where $Y\in 
\mathcal{U}_{FNA}$ and $X$ is a subspace of $\mathbb{G}$, and every $%
\varepsilon \in (0,1)$ there exists an $\varepsilon $-isometry $f:Y\rightarrow 
\mathbb{G}$ such that $f(g(x))=x$ for all $x\in X$. We show that all
non-archimedean Banach spaces of almost universal disposition for the class 
$\mathcal{U}_{FNA}$ are $\varepsilon$-isometric. Furthermore, all non-archimedean
Banach spaces of almost universal disposition for the class $\mathcal{U}%
_{FNA}$ are isometrically isomorphic if and only if $\mathbb{K}$ is
spherically complete and $\left\{ \left\vert \lambda \right\vert :\lambda
\in \mathbb{K}\backslash \left\{ 0\right\} \right\} =\left( 0,\infty \right)$.
\end{abstract}

\section{Introduction}

In 1966 Gurari\v{i} constructed a separable (real) Banach space $\mathbb{G}$
of \emph{almost universal disposition} for finite-dimensional spaces (called
later the \emph{Gurari\v{i} space}), see \cite{gur}, which means the
following condition:
\begin{enumerate}
	\item[(G)] \emph{For every isometry $g:X\rightarrow Y$, where $Y$ is a
		finite-dimensional Banach space and $X$ is a subspace of $\mathbb{G}$, and
		every $\varepsilon \in (0,1)$ there exists an $\varepsilon $-isometry $%
		f:Y\rightarrow \mathbb{G}$ such that $f(g(x))=x$ for all $x\in X$.}
\end{enumerate}
A linear operator $f:E\rightarrow F$ between Banach spaces $E$ and $F$ is an 
$\varepsilon $\emph{-isometry} if for $x\in E$ with $\Vert x\Vert =1$ one
has $(1+\varepsilon )^{-1}<\Vert f(x)\Vert <1+\varepsilon $. By an \emph{%
isometry} we mean a linear operator $f:E\rightarrow F$ that is an $%
\varepsilon $-isometry for every $\varepsilon >0$, that is, $\Vert f(x)\Vert
=\Vert x\Vert $ for each $x\in E$.

One can prove easily that the Gurari\v{i} space $\mathbb{G}$ is unique up to
isomorphism of norm arbitrarily close to one. Nevertheless, the question
whether the Gurari\v{i} space is unique up to isometry remained open for a
longer time. It was answered affirmatively by Lusky in 1976, see \cite{lusky}%
, who used quite technical and difficult methods involving techniques
developed by Lazar and Lindenstrauss \cite{lazar}. Much simpler proof has
been provided by Kubi\'s and Solecki in 2013, see \cite{KuSo}.

In \cite{KuSo} the authors proved the following

\begin{theorem}
\label{kuso} Let $E,F$ be separable Gurari\v{i} spaces and $\varepsilon >0$.
Assume $X\subset E$ is a finite-dimensional space and $f:X\rightarrow F$ is
an $\varepsilon -$isometry. Then there exists a bijective isometry $%
h:E\rightarrow F$ such that $\Vert h|_{X}-f\Vert <\varepsilon $.
\end{theorem}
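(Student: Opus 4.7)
The plan is to prove the theorem by a back-and-forth construction, the standard method for establishing strong homogeneity of Fra\"iss\'e-type limits: property (G) of both $E$ and $F$ plays the role of the amalgamation/extension property, and it allows us to upgrade the initial $\varepsilon$-isometry $f$ into a bijective isometry between the two Gurari\v{i} spaces.

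First I would fix countable dense sequences $(e_n)_{n\in\NN}\subset E$ and $(d_n)_{n\in\NN}\subset F$ together with a summable sequence of positive tolerances $(\delta_n)$ whose total $\sum_n\delta_n$ is strictly less than $\varepsilon-\varepsilon_0$, where $\varepsilon_0<\varepsilon$ is a chosen bound on the distortion of $f$. Then by induction on $n$ I would construct finite-dimensional chains $X=X_0\subset X_1\subset\cdots\subset E$ and $f(X)=Y_0\subset Y_1\subset\cdots\subset F$, together with $\varepsilon_n$-isometries $f_n\colon X_n\to Y_n$ satisfying $(X_0,Y_0,f_0)=(X,f(X),f)$, $\varepsilon_n\searrow 0$, and $\|f_{n+1}|_{X_n}-f_n\|<\delta_n$, taking care at alternating steps to force $e_k\in X_{2k}$ and $d_k\in Y_{2k+1}$. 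Passing to the limit on the dense union $\bigcup_n X_n$ and extending by continuity yields a linear isometry $h\colon E\to F$; its range is dense and thus, by completeness of $F$, equal to all of $F$, while a telescoping estimate gives $\|h|_X-f\|\le\sum_n\delta_n<\varepsilon$.

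The real technical content, and the main obstacle, lies in the one-step extension lemma required at each stage: given an $\eta$-isometry $g\colon A\to F$ with $A\subset E$ finite-dimensional, a finite-dimensional $A'\supset A$ in $E$, and a tolerance $\delta>0$, produce an $(\eta+\delta)$-isometry $g'\colon A'\to F$ with $\|g'|_A-g\|<\delta$ (and symmetrically for the ``back'' step, which must cover a prescribed vector of $F$). To prove this I would amalgamate $A\hookrightarrow A'$ with $g\colon A\to g(A)\subset F$, constructing a finite-dimensional Banach space $Z$ containing isometric copies of both $A'$ and $g(A)$ whose restrictions to $A$ differ in operator norm by at most $\eta$; concretely, $Z$ is the ``$\eta$-mapping cylinder'', realized as $A'\oplus g(A)$ modulo $\{(a,-g(a)):a\in A\}$ with an appropriately chosen quotient norm making both canonical embeddings isometric. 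With $Z$ in hand, property (G) of $F$, applied to the inclusion $g(A)\hookrightarrow Z$ and the tolerance $\delta$, yields a $\delta$-isometry $Z\to F$ which is close to the identity on $g(A)$; the composition $A'\hookrightarrow Z\to F$ is the desired $g'$.

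The delicate book-keeping throughout consists in arranging the $\varepsilon_n$ and $\delta_n$ so that the accumulated distortion $\eta+\delta$ at the $n$-th step is still bounded by $\varepsilon_{n+1}$ with $\varepsilon_n\searrow 0$, while $\sum_n\delta_n$ simultaneously controls the drift of $f_n|_X$ away from the original $f$. This is a routine but genuinely quantitative adaptation of the usual exact Fra\"iss\'e limit uniqueness argument, the key conceptual point being that property (G) is exactly strong enough to perform the amalgamation step for $\varepsilon$-isometries rather than only for isometries.
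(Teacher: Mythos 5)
A preliminary remark: the paper contains no proof of Theorem \ref{kuso} --- it is quoted as background from Kubi\'s--Solecki \cite{KuSo} --- so the comparison can only be with that cited argument, and your overall architecture (a back-and-forth along dense sequences, driven by a one-step approximate-amalgamation lemma extracted from property (G), followed by a density-plus-completeness argument for surjectivity and a telescoping estimate for the drift) is indeed exactly theirs. The problem is that your statement of the one-step lemma is quantitatively wrong in a way that, taken literally, destroys the proof. You ask: given an $\eta$-isometry $g\colon A\to F$ and $A\subset A'$, produce an $(\eta+\delta)$-isometry $g'\colon A'\to F$ with $\Vert g'|_A-g\Vert<\delta$. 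Iterating this, the distortions can only increase, $\varepsilon_{n+1}\geq\varepsilon_n+\delta_n$, so your requirement that ``the accumulated distortion $\eta+\delta$ at the $n$-th step is still bounded by $\varepsilon_{n+1}$ with $\varepsilon_n\searrow0$'' is self-contradictory, and the limit map $h$ is only an $\bigl(\varepsilon_0+\sum_n\delta_n\bigr)$-isometry, not an isometry. The correct lemma --- which your own sketch of its proof actually establishes --- has the roles of the two parameters interchanged: applying (G) to the isometric embedding $g(A)\hookrightarrow Z$ with tolerance $\delta$ yields a $\delta$-isometry $\varphi\colon Z\to F$ with $\varphi|_{g(A)}=\mathrm{id}$ (exactly, not approximately), so $g'=\varphi\circ(A'\hookrightarrow Z)$ is a $\delta$-isometry (the distortion is \emph{reset} to an arbitrarily small $\delta$), while the drift satisfies $\Vert g'|_A-g\Vert\leq(1+\delta)\eta$, i.e.\ it is controlled by the \emph{old} distortion $\eta$, not by $\delta$. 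With this version the bookkeeping closes: choose $\varepsilon_0<\varepsilon$ with $f$ an $\varepsilon_0$-isometry (possible since $X$ is finite-dimensional), then $\varepsilon_n\searrow0$ so fast that $\sum_{n\geq0}(1+\varepsilon_{n+1})\varepsilon_n<\varepsilon$.

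A second, smaller defect concerns the amalgam itself. The quotient of $A'\oplus g(A)$ by $\{(a,-g(a)):a\in A\}$ identifies the two copies of $A$ \emph{exactly}, so if both canonical embeddings were isometric one would get $\Vert a\Vert=\Vert g(a)\Vert$ for all $a\in A$, which fails when $g$ is only an $\eta$-isometry; the naive push-out cannot simultaneously be isometric on both sides and have the copies of $A$ merely $\eta$-close. What is needed is the perturbed seminorm of \cite{KuSo},
\[
\Vert(x,y)\Vert:=\inf_{u\in A}\bigl(\Vert x-u\Vert_{A'}+\Vert y+g(u)\Vert_{F}+\eta\Vert u\Vert\bigr),
\]
which makes both embeddings isometric while keeping $\Vert i(a)-j(g(a))\Vert\leq\eta\Vert a\Vert$. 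Both defects are repairable, but they sit exactly at the point you yourself identify as the real technical content, so as written the proof does not go through.
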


Applying this result for $X$ being the trivial space one gets the result of
Lusky \cite{lusky} stating that the Gurari\v{i} space is unique up to
isometry.

A Banach space $E$ is said to be of \emph{universal disposition} for the
class $\mathcal{U}$ of finite-dimensional spaces if it satisfies the
following condition:
\begin{enumerate}
	\item[(G1)] \emph{For every isometry $j:X\rightarrow Y$, where $Y\in \mathcal{U}$
		and $X\subset E$, there is an isometry $f:Y\rightarrow E$ such that $f\left(
		j\left( x\right) \right) =x$ for all $x\in X$.}		
\end{enumerate}
We refer the reader to \cite{avil} and \cite{garb-kubis}, where recent
developments in the study of Gurari\v{i} spaces, spaces of universal
disposition, and related topics are surveyed.

In the present paper we study non-archimedean counterparts of the above
concepts. The property of being of (almost) universal disposition for
finite-dimensional non-archime\-dean normed spaces is defined precisely in the same way as for the real case mentioned above.

From now on, by $\mathbb{K}$\ we will denote a non-archimedean complete
non-trivially valued field, i.e. the valuation satisfies\emph{\ the strong
triangle inequality}: 
\begin{equation*}
\left\vert \lambda +\mu \right\vert \leq \max \left\{ \left\vert \lambda
\right\vert ,\left\vert \mu \right\vert \right\}
\end{equation*}
for all $\lambda ,\mu \in \mathbb{K}$.

All linear spaces considered in this paper are over $\mathbb{K}$. Recall
that 
\begin{equation*}
\left\vert \mathbb{K}^{\ast }\right\vert =\left\{ \left\vert \lambda
\right\vert :\lambda \in \mathbb{K}\backslash \left\{ 0\right\} \right\}
\end{equation*}
is the \emph{value group} of $\mathbb{K}$.

$\mathbb{K}$ is said to be \emph{discretely valued} if $0$ is the only
accumulation point of $\left\vert \mathbb{K}^{\ast }\right\vert $; then,
there exists a \emph{uniformizing element }$\rho \in \mathbb{K}$ with $%
0<\left\vert \rho \right\vert <1$ such that $\left\vert \mathbb{K}^{\ast
}\right\vert =\left\{ \left\vert \rho \right\vert ^{n}:n\in \mathbb{Z}%
\right\} $. Otherwise, we say that $\mathbb{K}$ is \emph{densely valued}
(then, $\left\vert \mathbb{K}^{\ast }\right\vert $ is a dense subset of $%
[0,\infty )$).

By a \emph{non-archimedean Banach space} we mean a Banach space $E$ equipped
with a non-archimedean norm $\Vert .\Vert $, i.e. a norm for which the
triangle inequality is replaced by a stronger condition $\Vert x+y\Vert \leq
\max \{\Vert x\Vert ,\Vert y\Vert \}$ for all $x,y\in E$.

An infinite-dimensional normed space $E$ over $\mathbb{K}$\ is of \emph{countable type} if it
contains a countable set whose linear hull is dense in $E$. If $\mathbb{K}$
is separable, then a normed space is of countable type if and only if it is
separable.

We say that $E$ (in particular, $E$ may be equal to $\mathbb{K}$) is \emph{spherically complete }%
if every shrinking sequence of balls in $E$ has a non-empty intersection;
otherwise, $E$ is \emph{non-spherically complete}. Every finite-dimensional
Banach space over $\mathbb{K}$ has an equivalent non-archimedean norm. We
refer the reader to the monographs \cite{sc1} and \cite{Rooij} for
non-archimedean concepts mentioned above.

We say that a spherically complete Banach space $\widehat{E}$ is the \emph{%
spherical completion} of a non-archimedean Banach space $E$, if there exists
an isometric embedding $i:E\rightarrow \widehat{E}$ and $\widehat{E}$ has no
proper spherically complete linear subspace containing $i\left( E\right) $.
Applying the natural identification, we will usually identify $E$ with $%
i\left( E\right) .$ Every Banach space (in particular $\mathbb{K}$) has the
spherical completion and any two spherical completions of $E$ are
isometrically isomorphic (\cite[Theorem 4.43]{Rooij}).

Let $\mathcal{U}_{FNA}$ be the class of all non-archimedean
finite-dimensional normed spaces. As it can be expected, properties of
spaces $E$ of (almost) universal disposition for the class $\mathcal{U}%
_{FNA} $ strictly depend on the valued field $\mathbb{K}$, in particular, on
whether it is spherically complete or not. In Section \ref{sect-aud} we show
that all non-archimedean Banach spaces of almost universal disposition for
the class $\mathcal{U}_{FNA}$ are $\varepsilon$-isometric with arbitrarily small $\varepsilon>0$ (Corollary \ref%
{cor-1}). Moreover, all non-archimedean Banach spaces of almost universal disposition
for the class $\mathcal{U}_{FNA}$ are isometrically isomorphic if and only
if $\mathbb{K}$ is spherically complete and $\left\vert \mathbb{K}^{\ast
}\right\vert =\left( 0,\infty \right) $ (Proposition \ref{izo}). The main
result of this section is the following

\begin{theorem}
\label{T-char}
Let $\mathbb{K}$ be a non-archimedean valued field.
The following conditions are equivalent:
\begin{enumerate}
\item[({\rm a})] Every non-archimedean Banach space of countable type over $%
\mathbb{K}$ is of almost universal disposition for the class $\mathcal{U}_{FNA}$.
\item[({\rm b})] $\mathbb{K}$ is densely valued.

\end{enumerate}
\end{theorem}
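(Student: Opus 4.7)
The plan is to prove both implications by exploiting the interplay between $|\mathbb{K}^*|$ and the norms attained on finite-dimensional spaces, combined with standard almost-orthogonal decompositions in non-archimedean analysis.

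For (a)$\Rightarrow$(b) I argue by contraposition: assume $\mathbb{K}$ is discretely valued with uniformizer $\rho$, so $|\mathbb{K}^*|=\{|\rho|^n:n\in\mathbb{Z}\}$, and exhibit a countable-type Banach space failing almost universal disposition. Take $E:=c_0(\mathbb{K})$; since the norm on $c_0$ is attained coordinatewise, $\|E\|\subseteq|\mathbb{K}^*|\cup\{0\}$. Let $e_1\in E$ be a unit vector, set $X:=\mathbb{K}e_1$, and let $Y:=\mathbb{K}^2$ with $\|(\alpha,\beta)\|:=\max\{|\alpha|,t|\beta|\}$ for $t:=|\rho|^{1/2}\in(|\rho|,1)\setminus|\mathbb{K}^*|$. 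The map $g:X\to Y$, $\alpha e_1\mapsto(\alpha,0)$, is isometric. If some $\varepsilon$-isometry $f:Y\to E$ satisfied $f\circ g=\mathrm{id}_X$, the vector $w:=f((0,1))$ would satisfy $(1+\varepsilon)^{-1}t<\|w\|<(1+\varepsilon)t$ with $\|w\|\in|\mathbb{K}^*|$. The nearest elements of $|\mathbb{K}^*|$ to $t$ are $|\rho|$ and $1$, so $\|w\|/t\le|\rho|^{1/2}$ or $\|w\|/t\ge|\rho|^{-1/2}$; either contradicts the $\varepsilon$-isometry condition as soon as $\varepsilon<|\rho|^{-1/2}-1$.

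For (b)$\Rightarrow$(a), fix an infinite-dimensional Banach space $E$ of countable type, a finite-dimensional $X\subseteq E$, an isometry $g:X\to Y$ with $Y\in\mathcal{U}_{FNA}$, and $\varepsilon\in(0,1)$. Pick an auxiliary $\delta\in(0,\varepsilon/(2+\varepsilon))$. The construction has three steps. First, since over densely valued $\mathbb{K}$ every finite-dimensional space admits a $(1-\delta)$-orthogonal basis, pick such a basis $x_1,\dots,x_k$ of $X$. Second, extend $g(x_1),\dots,g(x_k)$ (themselves $(1-\delta)$-orthogonal in $Y$, as $g$ is isometric) to a $(1-\delta)$-orthogonal basis $g(x_1),\dots,g(x_k),y_{k+1},\dots,y_n$ of $Y$, using density of $|\mathbb{K}^*|$ to perform the successive best-approximation extensions. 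Third, using that $E$ is infinite-dimensional of countable type (hence admits a $(1-\delta)$-orthogonal Schauder basis whose far tail is almost orthogonal to any fixed finite-dimensional subspace), pick $z_{k+1},\dots,z_n\in E$ such that $x_1,\dots,x_k,z_{k+1},\dots,z_n$ is $(1-\delta)$-orthogonal in $E$ and $(1+\delta)^{-1}\|y_j\|\le\|z_j\|\le(1+\delta)\|y_j\|$ for each $j$. Finally, define $f:Y\to E$ linearly by $f(g(x_i)):=x_i$ and $f(y_j):=z_j$; the ultrametric inequality together with the two $(1-\delta)$-orthogonalities and the norm matching then yields $\|f(v)\|/\|v\|\in[(1-\delta)/(1+\delta),(1+\delta)/(1-\delta)]\subseteq((1+\varepsilon)^{-1},1+\varepsilon)$ for every nonzero $v\in Y$, and $f\circ g=\mathrm{id}_X$ holds by construction.

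The technical crux is Step 3. Density of $|\mathbb{K}^*|$ in $(0,\infty)$ handles the norm matching: for any nonzero $v\in E$, the set $\{|\lambda|\|v\|:\lambda\in\mathbb{K}^*\}$ is dense in $(0,\infty)$, so each prescribed $\|y_j\|>0$ is realized within factor $1+\delta$ by a rescaled vector $\lambda_j e_{N+j}$, where $(e_n)$ is the chosen $(1-\delta)$-orthogonal Schauder basis of $E$. For almost orthogonality to $X$, one invokes the classical non-archimedean fact that any finite-dimensional subspace of a countable-type space is $(1-\delta)$-contained in the span of finitely many initial basis vectors, so sufficiently far tail elements (and their rescalings) are almost orthogonal to $X$; iterating produces the $z_j$'s with joint $(1-\delta)$-orthogonality. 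The bookkeeping of constants — two $(1-\delta)$-orthogonality factors (one in $Y$, one in $E$) together with the $(1+\delta)$ norm-matching factor yielding overall distortion $(1+\delta)/(1-\delta)$ — is what pins down the quantitative choice $\delta<\varepsilon/(2+\varepsilon)$.
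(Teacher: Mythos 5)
Your proposal is correct and follows essentially the same route as the paper: for (a)$\Rightarrow$(b) you build the same two-dimensional counterexample $(\mathbb{K}^2,\max\{|\alpha|,t|\beta|\})$ with $t$ chosen in the multiplicative gap of $|\mathbb{K}^*|$, and for (b)$\Rightarrow$(a) your three-step almost-orthogonal-basis matching is exactly the construction the paper packages as Theorem \ref{Th-aud} via Lemma \ref{Lem-1}. The only caveat is minor constant bookkeeping: extending a $(1-\delta)$-orthogonal set to an orthogonal basis of the ambient space costs an extra factor (which is why the paper works with $\sqrt{t}$ versus $t$), so your final distortion is really $(1+\delta)/(1-\delta)^2$ rather than $(1+\delta)/(1-\delta)$ --- harmless, but it shifts the admissible threshold for $\delta$.
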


Section \ref{sect-ud}\ focuses on the study Banach spaces of universal
disposition for the class $\mathcal{U}_{FNA}$. It turns out that a real
Banach space $G$ of almost universal disposition for the class $\mathcal{U}$
can be characterized (see \cite{Kub-game}) by the following
condition:
\begin{enumerate}
	\item[(H)] for every $\varepsilon >0$, for every finite-dimensional normed spaces $%
	X\subset Y$ and for every isometric embedding $j:X\rightarrow G$, $\ $there
	is an isometric embedding $f:Y\rightarrow G$ such that $\left\vert
	\left\vert j-f|_{X}\right\vert \right\vert <\varepsilon$.
\end{enumerate}
In contrast to the real case, in a non-archimedean setting the condition (H)
characterizes Banach spaces of universal disposition for the class $\mathcal{%
U}_{FNA}$. We show the following

\begin{theorem}
\label{Pro-iso}Let $G$ be a non-archimedean Banach space which satisfies the
following property: For every finite-dimensional non-archimedean normed
space $Y$ and every isometric embedding $j:X\rightarrow G$, $\ $where $%
X\subset Y$ is a linear subspace, there is an isometry $f:Y\rightarrow G$
such that $\left\vert \left\vert j-f|_{X}\right\vert \right\vert <1$. Then $%
G $ is of universal disposition for the class $\mathcal{U}_{FNA}$.
\end{theorem}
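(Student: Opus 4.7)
The plan is to exploit the non-archimedean strong triangle equality ($\|a+b\|=\|b\|$ whenever $\|a\|<\|b\|$) in order to promote the single approximate extension furnished by the hypothesis to an exact extension by adding one small linear correction.

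Given an isometry $g\colon X\to Y$ with $Y\in\UU_{FNA}$ and $X$ a subspace of $G$ (necessarily finite-dimensional, since $g$ is injective and $Y$ is finite-dimensional), set $X':=g(X)\subseteq Y$ and let $j:=g^{-1}\colon X'\to G$; this is an isometric embedding. Applying the hypothesis to the pair $X'\subseteq Y$ and the embedding $j$ yields an isometry $f_0\colon Y\to G$ with $\varepsilon_0:=\|f_0|_{X'}-j\|<1$. The desired isometry will be $f:=f_0+\tilde u_0$, where $\tilde u_0\colon Y\to G$ is a suitable linear extension of $u_0:=j-f_0|_{X'}\colon X'\to G$ (note $\|u_0\|=\varepsilon_0<1$).

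The key step is to construct $\tilde u_0$ with $\|\tilde u_0\|<1$. Fix $t\in(\varepsilon_0,1)$ and invoke the standard non-archimedean fact (see, e.g., \cite{Rooij}) that in any finite-dimensional space over $\mathbb{K}$ a basis of a subspace extends to a $t$-orthogonal basis of the ambient space; equivalently, $X'$ admits a linear complement $Z$ in $Y$ with $\|x+z\|\ge t\max(\|x\|,\|z\|)$ for all $x\in X'$ and $z\in Z$. Define $\tilde u_0$ to equal $u_0$ on $X'$ and $0$ on $Z$; then $\|\tilde u_0(x+z)\|=\|u_0(x)\|\le\varepsilon_0\|x\|\le(\varepsilon_0/t)\|x+z\|$, so $\|\tilde u_0\|\le\varepsilon_0/t<1$.

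It remains to verify that $f:=f_0+\tilde u_0$ is the required isometry. On $X'$ we have $f|_{X'}=f_0|_{X'}+u_0=j$, so $f(g(x))=j(g(x))=x$ for every $x\in X$. For nonzero $y\in Y$, $\|f_0(y)\|=\|y\|>\|\tilde u_0(y)\|$, whence $\|f(y)\|=\|y\|$ by the strong triangle equality. The main obstacle is the extension step; once the $t$-orthogonal complement is in hand, everything else follows from one application of the strong triangle inequality. Crucially, $t$-orthogonal complements exist over an arbitrary non-archimedean $\mathbb{K}$ for any $t<1$---only the limit case $t=1$ would require a further hypothesis such as spherical completeness, which we do not need here.
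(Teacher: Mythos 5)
Your proof is correct and follows essentially the same route as the paper: both take the approximate extension $f_0$ supplied by the hypothesis, split $Y$ as $X'\oplus Z$ with $Z$ a $t$-orthogonal complement for $t$ close enough to $1$, and observe that the map agreeing with $j$ on $X'$ and with $f_0$ on $Z$ (your $f_0+\tilde u_0$) is an isometry because the correction term has norm strictly below that of $f_0(y)$, so the strong triangle equality applies. The only cosmetic slip is the phrase that \emph{any} basis of $X'$ extends to a $t$-orthogonal basis of $Y$ (false for a badly chosen basis), but you only use the existence of a $t$-orthogonal complement, which is the correct standard fact.
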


Recall that a Banach space $X$ is (\emph{isometrically}) \emph{universal}
for the class of Banach spaces $\mathcal{C}$ if $X\in \mathcal{C}$ and for
any $Y\in \mathcal{C}$, there is an isometrical embedding $Y\rightarrow X$.
Note that, as a result of Banach-Mazur theorem, the space \ $C[0,1]$ is
isometrically\emph{\ }universal for the class of separable real Banach
spaces.

If $\mathbb{K}$ is spherically complete, we can properly select a set $I$
and a map $s:I\rightarrow \left( 0,\infty \right) $ such that every
non-archimedean Banach space of countable type can be isometrically embedded
into $E_{u}=c_{0}\left( I:s\right) $. However $E_{u}$ is isometrically
universal for the class of non-archimedean Banach spaces of countable type
if and only if $\mathbb{K}$ is spherically complete and $\left( 0,\infty
\right) $ is an union of at most countably many cosets of $\left\vert 
\mathbb{K}^{\ast }\right\vert $ (Proposition \ref{P-univers}). On the other
hand, $E_{u}$ is never separable. If $\mathbb{K}$ is non-spherically
complete, the role of $c_{0}\left( I:s\right) $ is replaced by $\ell^{\infty }$%
, which clearly is not of countable type (Remark \ref{rem-univ}).

Applying Theorem \ref{Pro-iso} we prove that the spherical completion $%
\widehat{E_{u}}$ of $E_{u}$ is a space of universal disposition for the
class $\mathcal{U}_{FNA}$, see Theorem \ref{Prop-ud}. We show also that the
suitably selected proper linear subspace of $\widehat{E_{u}}$, denoted as $%
E_{h}$, is also of universal disposition for the class $\mathcal{U}_{FNA}$
(Theorem \ref{Eh}). If $\mathbb{K}$ is spherically complete, then $E_{h}=E_{u}$; hence, $E_{h}$ has an orthogonal base and is of countable type
if and only if $\mathbb{K}$ is spherically complete and $\left( 0,\infty
\right) $ is the union of at most countably many cosets of $\left\vert 
\mathbb{K}^{\ast }\right\vert $ (Corollary \ref{Ehh}).

\section{Preliminaries}

Let $t\in (0,1]$. A subset $\left\{ x_{i}:i\in I\right\} \subset E$ is
called $t-$\emph{orthogonal }(\emph{orthogonal \ }for $t=1$) if for each
finite subset $J\subset I$ and all $\left\{ \lambda _{i}\right\} _{i\in
J}\subset \mathbb{K}$ we have 
\begin{equation*}
\left\vert \left\vert \sum_{i\in J}\lambda _{i}x_{i}\right\vert \right\vert
\geq t\cdot \max_{i\in J}\left\vert \left\vert \lambda _{i}x_{i}\right\vert
\right\vert .
\end{equation*}%
If additionally $\overline{\left[ \left\{ x_{i}\right\} _{i\in I}\right] }=E 
$,$\ $then $\left\{ x_{i}\right\} _{i\in I}$ is said to be a $t-$\emph{%
orthogonal base} of $E$. Then every $x\in E$ has an unequivocal expansion 
\begin{equation*}
x=\sum_{i\in I}\lambda _{i}x_{i}\text{ }(\lambda _{i}\in \mathbb{K},\text{ }%
i\in I).
\end{equation*}
Every non-archimedean Banach space of countable type has a $t-$orthogonal
base for each $t\in \left( 0,1\right) $; if $\mathbb{K}$ is spherically
complete, then every non-archimedean Banach space of countable type has an
orthogonal base (\cite[Lemma 5.5]{Rooij}). Every closed linear subspace of a
non-archimedean Banach space with an orthogonal base has an orthogonal base (%
\cite[Theorem 5.9]{Rooij}).

Linear subspaces $D,$ $D_{0}$ of a non-archimedean Banach space $E$ are
called \emph{orthogonal } if $\left\vert \left\vert x+y\right\vert
\right\vert =\max \left\{ \left\vert \left\vert x\right\vert \right\vert
,\left\vert \left\vert y\right\vert \right\vert \right\} $ for all $x\in D$
and $y\in D_{0}$; then we will write $D\perp D_{0}$.

Let $D$ be a closed linear subspace of $E$. Then $D$ is \emph{%
orthocomplemented }in $E$ if there is a linear subspace $\ D_{0}$ \ of $\ E$
\ such that \ $D+D_{0}=E$ \ and $D$ $\perp D_{0}$. Consequently, there
exists a surjective projection (called an \emph{orthoprojection}) $%
P:E\rightarrow D$ with $\left\vert \left\vert P\right\vert \right\vert \leq 1
$. Observe that $D_{1}\perp D_{2}$ implies $D_{1}\cap D_{2}=\varnothing $;
hence the sum $D_{1}+D_{2}$ is direct.

Let $D$ and $E_{0}$ be linear subspaces of a normed space $E$. Recall that $%
E_{0}$ is called an \emph{immediate extension} of $D$ if $D\subset E_{0}$
and there is no nonzero element of $E_{0}$ that is orthogonal to $D$; in
other words, for every $x\in E_{0}\backslash D$ we have $dist\left(
x,D\right) <\left\vert \left\vert x-d\right\vert \right\vert $ for all $d\in
D$. A spherical completion $\widehat{E}$ of $E$ is a maximal immediate
extension of $E$. Let $I$ be a non-empty set and let $s:I\rightarrow \left(
0,\infty \right) $ be a map. By 
\begin{equation*}
\ell^{\infty }\left( I:s\right) :=\{\left( \lambda ^{i}\right) _{i\in I}\in 
\mathbb{K}^{I}:\sup_{i\in I}\left\vert \lambda _{i}\right\vert \cdot s\left(
i\right) <\infty \}
\end{equation*}%
we denote the linear space over $\mathbb{K}$ equipped with the norm 
\begin{equation*}
\left\vert \left\vert \left( \lambda _{i}\right) _{i\in I}\right\vert
\right\vert :=\sup_{i\in I}\left\vert \lambda _{i}\right\vert \cdot s\left(
i\right) .
\end{equation*}%
Then $\ell^{\infty }\left( I:s\right) $ is a non-archimedean Banach space.

Let$\ c_{0}\left( I:s\right) $ be a closed linear subspace of $\ell^{\infty
}\left( I:s\right) $ which consists of all $\left( \lambda ^{i}\right)
_{i\in I}\in \ell^{\infty }\left( I:s\right) $ such that for every $\varepsilon
>0$ there exists a finite $J\subset I$ for which $\left\vert \lambda
^{i}\right\vert \cdot s\left( i\right) <\varepsilon $ for every $i\in
I\backslash J.$ If $s\left( i\right) =1$ for all $i\in I$ we will write $%
\ell^{\infty }\left( I\right) $ and $c_{0}\left( I\right) $, respectively. In
particular $\ell^{\infty }:=\ell^{\infty }\left( \mathbb{N}\right) $ and $%
c_{0}:=c_{0}\left( \mathbb{N}\right) $.

Every\emph{\ }non-archimedean Banach space which has an orthogonal base is
isomorphic with $c_{0}\left( I\right) $ for some set $I$ (see \cite[Ch. 5]%
{Rooij}).

\section{Non-archimedean Banach space of almost universal disposition for
finite-dimensional spaces\label{sect-aud}}

First we prove the following technical fact.

\begin{lemma}
\label{Lem-1}Let $E$ be a non-archimedean Banach space of countable type,
let $F$ be a finite-dimensional linear subspace of $E$, $t\in (0,1)$ and $%
\left\{ x_{1},...,x_{m}\right\} $ be a $\sqrt{t}-$orthogonal base of $F$.
Then there exist $x_{m+1},x_{m+2},...\in E\backslash F$ such that $\left(
x_{n}\right) $ is a $t-$orthogonal base of $E$.
\end{lemma}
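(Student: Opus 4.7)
The plan is to reduce to the finite-codimensional quotient $E/F$: select a suitable $\beta$-orthogonal base of $E/F$, lift it to $E$, and concatenate with the given base of $F$. Since $F$ is finite-dimensional it is closed in $E$, so $E/F$ is a non-archimedean Banach space; moreover, $E/F$ inherits countable type from $E$ because the projection of a countable set with dense linear span in $E$ has dense linear span in $E/F$.

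I would first fix parameters $\beta,\gamma\in(0,1)$ with $\beta\gamma\geq\sqrt t$ (for instance $\beta=\gamma=t^{1/4}$). Applying the cited result of Rooij, pick a $\beta$-orthogonal base $(\bar z_n)_{n\geq 1}$ of $E/F$. For each $n$, lift $\bar z_n$ to a representative $z_n\in E$ with $\|z_n\|\leq\gamma^{-1}\|\bar z_n\|$; this is possible since $\bar z_n\neq 0$ and the quotient norm is an infimum. Setting $x_{m+n}:=z_n$ gives vectors outside $F$, because their quotient classes are nonzero.

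The heart of the argument is verifying that the concatenated sequence $(x_n)_{n\geq 1}$ is $t$-orthogonal. Writing an arbitrary finite combination as $\eta=a+b$ with $a=\sum\lambda_i x_i\in F$ and $b=\sum\mu_j z_j$, the quotient projection and the choice of $\beta,\gamma$ give
\[
\|\eta\|\;\geq\;\|\bar b\|\;\geq\;\beta\max_j\|\mu_j\bar z_j\|\;\geq\;\beta\gamma\max_j\|\mu_j z_j\|\;\geq\;t\max_j\|\mu_j z_j\|.
\]
For the bound involving $\|\lambda_i x_i\|$ I would split by cases: if $\|a\|\neq\|b\|$ the sharp non-archimedean identity gives $\|\eta\|=\max(\|a\|,\|b\|)\geq\|a\|\geq\sqrt t\max_i\|\lambda_i x_i\|\geq t\max_i\|\lambda_i x_i\|$; if $\|a\|=\|b\|$, the display above combined with $\|b\|\leq\max_j\|\mu_j z_j\|$ yields $\|\eta\|\geq\beta\gamma\|b\|=\beta\gamma\|a\|\geq\beta\gamma\sqrt t\max_i\|\lambda_i x_i\|\geq t\max_i\|\lambda_i x_i\|$. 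Density of the linear span of $(x_n)$ follows by approximating $\bar v\in E/F$ for $v\in E$ by a finite combination of the $\bar z_n$ and expressing the residual element of $F$ in the base $x_1,\ldots,x_m$.

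The main obstacle is precisely the equal-norm case $\|a\|=\|b\|$, where the non-archimedean triangle inequality degenerates and no direct lower bound on $\|\eta\|$ in terms of $\|a\|$ is available; routing the estimate through the quotient is what rescues it, and the threshold $\beta\gamma\geq\sqrt t$ is exactly the one needed so that, combined with the assumed $\sqrt t$-orthogonality on $F$, the total orthogonality constant reaches $t$.
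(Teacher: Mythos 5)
Your proof is correct, and it reaches the conclusion by a genuinely different route than the paper. The paper invokes the orthocomplementation theorem for finite-dimensional subspaces (\cite[Theorem 2.3.13]{sc1}) to get a direct decomposition $E=F\oplus F_{0}$ with $\Vert u_{1}+u_{2}\Vert \geq \sqrt{t}\cdot \max \{\Vert u_{1}\Vert ,\Vert u_{2}\Vert \}$ for $u_{1}\in F$, $u_{2}\in F_{0}$, then takes a $\sqrt{t}$-orthogonal base of $F_{0}$; the two factors of $\sqrt{t}$ multiply to give the $t$-orthogonality in one line. You instead pass to the quotient $E/F$ (correctly noting that $F$ is closed and that countable type passes to quotients), pick a $\beta$-orthogonal base there, and lift each base vector $\gamma$-almost isometrically; the near-orthogonality of the lifted vectors to $F$ is then recovered from the contractivity of the quotient map, with the degenerate equal-norms case $\Vert a\Vert =\Vert b\Vert$ handled by routing the lower bound through $\Vert \bar{b}\Vert$. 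This is in effect the standard proof of the complement theorem unpacked inline: your argument is more self-contained (it uses only the existence of $\beta$-orthogonal bases in spaces of countable type, the same fact both proofs need), at the cost of an explicit case analysis that the paper's citation absorbs. Your constant bookkeeping ($\beta\gamma\geq\sqrt{t}$ combined with the $\sqrt{t}$-orthogonality of the given base of $F$, and the observation that $\Vert b\Vert\leq\max_{j}\Vert\mu_{j}z_{j}\Vert$) checks out, as does the density argument via approximation in $E/F$ followed by correction by an element of $F$.
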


\begin{proof}
By \cite[Theorem 2.3.13]{sc1} there exists a linear subspace $F_{0}\subset E$
such that $E=F\oplus F_{0}$ and 
\begin{equation*}
\left\vert \left\vert u_{1}+u_{2}\right\vert \right\vert \geq \sqrt{t}\cdot
\max \left\{ \left\vert \left\vert u_{1}\right\vert \right\vert ,\left\vert
\left\vert u_{2}\right\vert \right\vert \right\}
\end{equation*}
for all $u_{1}\in F,u_{2}\in F_{0}$. Applying \cite[Theorem 2.3.7]{sc1} we
select a $\sqrt{t}-$orthogonal base $\left( z_{n}\right) $ of $F_{0}$.
Denote $x_{m+n}:=z_{n}$ for every $n\in \mathbb{N}$. Then taking any $k\in 
\mathbb{N}$ and $\lambda _{1},...,\lambda _{k}\in \mathbb{K}$ one gets

\begin{multline*}
\left\vert \left\vert \sum_{i=1}^{k}\lambda _{i}x_{i}\right\vert \right\vert
\geq \sqrt{t}\cdot \max \left\{ \left\vert \left\vert \sum_{i=1}^{m}\lambda
_{i}x_{i}\right\vert \right\vert ,\left\vert \left\vert
\sum_{i=m+1}^{k}\lambda _{i}x_{i}\right\vert \right\vert \right\} \\
\geq \sqrt{t}\cdot \max \left\{ \sqrt{t}\cdot \max_{i=1,...,m}\left\vert
\left\vert \lambda _{i}x_{i}\right\vert \right\vert ,\sqrt{t}\cdot
\max_{i=m+1,...,k}\left\vert \left\vert \lambda _{i}x_{i}\right\vert
\right\vert \right\} \geq t\cdot \max_{i=1,...,k}\left\vert \left\vert
\lambda _{i}x_{i}\right\vert \right\vert .
\end{multline*}

Hence, $\left( x_{n}\right) $ is a required $t-$orthogonal base of $E$.
\end{proof}

\begin{theorem}
\label{Th-aud}A non-archimedean Banach space of countable type $E$ is of
almost universal disposition for the class $\mathcal{U}_{FNA}$ if and only
if $\left\vert \left\vert E^{\ast }\right\vert \right\vert $ is a dense
subset of $\left( 0,\infty \right) $.
\end{theorem}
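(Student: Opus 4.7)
For the ``only if'' direction, given $r>0$ and $\varepsilon\in(0,1)$, I would apply the almost-universal-disposition property to the zero subspace $X=\{0\}\subset E$ and to the one-dimensional space $Y=\mathbb{K}$ equipped with the non-archimedean norm $\|\lambda\|_Y:=r|\lambda|$. The resulting $\varepsilon$-isometry $f:Y\to E$ produces an element $f(1)\in E$ of norm in $(r/(1+\varepsilon),\,r(1+\varepsilon))$, which yields that $\|E^{\ast}\|$ is dense in $(0,\infty)$.

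For the converse, given an isometry $g:X\to Y$ with $X\subset E$ of dimension $m$ and $Y\in\mathcal{U}_{FNA}$ of dimension $m+k$, and $\varepsilon\in(0,1)$, I would fix a parameter $t\in(0,1)$ close to $1$ and a small auxiliary parameter $\delta>0$, both chosen in terms of $\varepsilon$. Using \cite[Theorem~2.3.13]{sc1} inside $Y$, I obtain a decomposition $Y=g(X)\oplus Z$ with $Z$ being $\sqrt{t}$-orthogonal to $g(X)$, together with a $\sqrt{t}$-orthogonal base $z_1,\dots,z_k$ of $Z$; set $r_i:=\|z_i\|$. The extension $f:Y\to E$ will be defined by $f\circ g:=\mathrm{id}_X$ and $f(z_i):=w_i$, where the $w_1,\dots,w_k\in E$ are yet to be constructed and should satisfy $\|w_i\|\in(r_i/(1+\delta),\,r_i(1+\delta))$ and make $\{x_1,\dots,x_m,w_1,\dots,w_k\}$ a $t$-orthogonal system, with $(x_1,\dots,x_m)$ a $\sqrt{t}$-orthogonal base of $X$.

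To produce the $w_i$, I would apply Lemma~\ref{Lem-1} to extend $(x_1,\dots,x_m)$ to a $t$-orthogonal base $(x_n)_{n\in\mathbb{N}}$ of $E$. Any element of the form $\lambda x_n$ with $n>m$ is then automatically compatible with the required $t$-orthogonality; the only remaining task is norm matching, i.e.\ finding, for each prescribed $r_i$, some $n>m$ and $\lambda\in\mathbb{K}^{\ast}$ with $|\lambda|\cdot\|x_n\|$ within factor $1+\delta$ of $r_i$ (choosing distinct $n$'s for different $i$'s is then easy). If $|\mathbb{K}^{\ast}|$ is dense in $(0,\infty)$, density of $\{|\lambda|\cdot\|x_{m+1}\|:\lambda\in\mathbb{K}^{\ast}\}$ is immediate. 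If $|\mathbb{K}^{\ast}|$ is discrete, then $\bigcup_{i\le m}|\mathbb{K}^{\ast}|\cdot\|x_i\|$ is a finite union of discrete orbits, hence a nowhere-dense subset of $(0,\infty)$; since $\|E^{\ast}\|\supseteq\bigcup_{n}|\mathbb{K}^{\ast}|\cdot\|x_n\|$ is dense by hypothesis, removing the finitely many orbits coming from $i\le m$ still leaves a dense set, giving the required $\lambda$ and $n>m$.

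Finally, a direct computation using the $\sqrt{t}$-orthogonality of $g(X)$ and $Z$ (and of the base $(z_i)$) together with the $t$-orthogonality of $\{x_1,\dots,x_m,w_1,\dots,w_k\}$ in $E$ and the bounds $\|w_i\|/r_i\in(1/(1+\delta),1+\delta)$ shows that the ratio $\|f(v)\|/\|v\|$ lies in $[\,t/(1+\delta),\,\max\{1/t^2,(1+\delta)/t\}\,]$ for every nonzero $v\in Y$. Choosing $t$ close to~$1$ and $\delta$ small enough (depending only on $\varepsilon$) places this interval inside $[(1+\varepsilon)^{-1},\,1+\varepsilon]$, completing the proof. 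The step I expect to require the most care is the norm-matching argument of the third paragraph: one must cleanly separate the densely- and discretely-valued cases for $\mathbb{K}$, since in the discrete case the needed norms cannot be obtained by scaling a single basis vector and one must instead exploit the density of $\|E^{\ast}\|$ to pick, from the full countable family $\{|\lambda|\cdot\|x_n\|\}$, an element not belonging to the forbidden orbits of the $x_i$ with $i\le m$.
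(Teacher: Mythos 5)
Your plan is correct and follows essentially the same route as the paper's proof: the ``only if'' direction tests almost universal disposition against a one-dimensional target of prescribed norm value, and the ``if'' direction extends a $\sqrt{t}$-orthogonal base of $X$ to matched $t$-orthogonal bases of $Y$ and of $E$ (via Lemma~\ref{Lem-1}), sends basis vectors to basis vectors, and uses density of $\left\Vert E^{\ast}\right\Vert$ to adjust the norms. The only slip is the inference that $\bigcup_{n}\left\vert \mathbb{K}^{\ast}\right\vert \cdot \left\Vert x_{n}\right\Vert$ is dense because its superset $\left\Vert E^{\ast}\right\Vert$ is (a subset of a dense set need not be dense); this is harmless, since in the discretely valued case $\mathbb{K}$ is spherically complete, so the base of $E$ may be taken genuinely orthogonal and the two sets then coincide, while in the densely valued case a single orbit already suffices, as you note.
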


\begin{proof}
Let $\ E$ be a non-archimedean Banach space of countable type. Assume that $%
\left\vert \left\vert E^{\ast }\right\vert \right\vert $ is a dense subset
of $\left( 0,\infty \right) $. Let $X$ be a finite-dimensional subspace of $%
E$, $Y$ be a non-archimedean finite-dimensional normed space and $%
i:X\rightarrow Y$ be an isometrical embedding. Assume that $\dim Y=m$ and $\dim
X=m_{0}$; clearly, $m_{0}\leq m$. Fix $\varepsilon >0\ $and take $t\in (%
\frac{1}{\sqrt[3]{1+\varepsilon }},1)$. Applying Lemma \ref{Lem-1} and \cite[%
Theorem 2.3.7]{sc1} we form a $t-$orthogonal base $\left( x_{n}\right) $ of $%
E$ such that $\left\{ x_{1},...,x_{m_{0}}\right\} $ is a $\sqrt{t}-$%
orthogonal base of $X$. Now, applying Lemma \ref{Lem-1} again, we select $%
y_{m_{0}+1},...,y_{m}\in Y$ such that $\left\{ y_{1},...,y_{m}\right\} $ is
a $t-$orthogonal base of $Y$, where $y_{k}=i\left( x_{k}\right) $ for $k\in
\{1,...,m_{0}\}$. Since, by assumption $\left\vert \left\vert E^{\ast
}\right\vert \right\vert $ is a dense subset of $\left( 0,\infty \right) $,
we can assume that $1\geq \left\vert \left\vert y_{k}\right\vert \right\vert
\geq t$ $\left( k=1,...,m\right) $ and $1\geq \left\vert \left\vert
x_{n}\right\vert \right\vert \geq t$ $\left( n\in \mathbb{N}\right) $; thus, 
$\left\vert \left\vert y_{k}\right\vert \right\vert \geq t\cdot \left\vert
\left\vert x_{k}\right\vert \right\vert $ for each $k\in \left\{
1,...,m\right\} $.

Define $f:Y\rightarrow E$ by setting%
\begin{equation*}
f\left( \sum_{k=1}^{m}\lambda _{k}y_{k}\right) =\sum_{k=1}^{m}\lambda
_{k}x_{k}.
\end{equation*}%
Clearly, $f\left( i\left( x\right) \right) =x$ for all $x\in X$. Let $%
y=\sum_{k=1}^{m}\lambda _{k}y_{k}\in Y$. Then, 
\begin{equation*}
\left\vert \left\vert y\right\vert \right\vert =\left\vert \left\vert
\sum_{k=1}^{m}\lambda _{k}y_{k}\right\vert \right\vert \geq t\cdot
\max_{k=1,...,m}\{\left\vert |\lambda _{k}y_{k}|\right\vert \}\geq
t^{2}\cdot \max_{k=1,...,m}\{\left\vert |\lambda _{k}x_{k}|\right\vert
\}\geq t^{3}\cdot \left\vert \left\vert \sum_{k=1}^{m}\lambda
_{k}x_{k}\right\vert \right\vert =t^{3}\cdot \left\vert \left\vert f\left(
y\right) \right\vert \right\vert .
\end{equation*}%
On the other hand, we have

\begin{equation*}
\left\vert \left\vert y\right\vert \right\vert =\left\vert \left\vert
\sum_{k=1}^{m}\lambda _{k}y_{k}\right\vert \right\vert \leq
\max_{k=1,...,m}\{|\left\vert |\lambda _{k}y_{k}|\right\vert \}\leq \frac{1}{%
t}\max_{k=1,...,m}\{\left\vert |\lambda _{k}x_{k}|\right\vert \}\leq \frac{1%
}{t^{2}}\left\vert \left\vert \sum_{k=1}^{m}\lambda _{k}x_{k}\right\vert
\right\vert =\frac{1}{t^{2}}\cdot \left\vert \left\vert f\left( y\right)
\right\vert \right\vert .
\end{equation*}%
Thus $\left( 1-\varepsilon \right) \left\vert \left\vert y\right\vert
\right\vert \leq \left\vert \left\vert f\left( y\right) \right\vert
\right\vert \leq \left( 1+\varepsilon \right) \left\vert \left\vert
y\right\vert \right\vert $.

Now assume that $\left\vert \left\vert E^{\ast }\right\vert \right\vert $ is
not dense in $\left( 0,\infty \right) $. Then there exist $s_{1}\in \left(
0,\infty \right) $ and $\varepsilon >0$ such that 
\begin{equation*}
\left\vert \left\vert E^{\ast }\right\vert \right\vert \cap \left(
s_{1}-2\varepsilon \cdot s_{1},s_{1}+2\varepsilon \cdot s_{1}\right)
=\varnothing .
\end{equation*}

Define $X=\mathbb{K}$ and $Y=\left( \mathbb{K}^{2},\left\vert \left\vert
.\right\vert \right\vert _{Y}\right) ,$ where $\left\vert \left\vert \left(
\lambda _{1},\lambda _{2}\right) \right\vert \right\vert _{Y}:=\max \left\{
\left\vert \lambda _{1}\right\vert ,s_{1}\cdot \left\vert \lambda
_{2}\right\vert \right\} ,$ $\lambda _{1},\lambda _{2}\in \mathbb{K}$.

Assume for a contradiction that there is an $\varepsilon -$isometry $%
f:Y\rightarrow E$. But then, taking $x_{0}=(0,1)\in Y$, we obtain $%
\left\vert \left\vert x_{0}\right\vert \right\vert _{Y}=s_{1}$ and $%
\left\vert \left\vert f\left( x_{0}\right) \right\vert \right\vert \geq
s_{1}+2\varepsilon \cdot s_{1}$ or $\left\vert \left\vert f\left(
x_{0}\right) \right\vert \right\vert \leq s_{1}-2\varepsilon \cdot s_{1}$.
Hence, $\left\vert \left\vert f\left( x_{0}\right) \right\vert \right\vert
>\left( 1+\varepsilon \right) \left\vert \left\vert x_{0}\right\vert
\right\vert $, or $\left\vert \left\vert f\left( x_{0}\right) \right\vert
\right\vert <\left( 1-\varepsilon \right) \left\vert \left\vert
x_{0}\right\vert \right\vert $, a contradiction.
\end{proof}

Next conclusion follows directly from Theorem \ref{Th-aud}.

\begin{corollary}
\label{cor-1}If $\mathbb{K}$ is densely valued, every non-archimedean Banach
space of countable type is of almost universal disposition for the class $%
\mathcal{U}_{FNA}$. All non-archimedean Banach spaces of almost universal
disposition for the class $\mathcal{U}_{FNA}$ are $\varepsilon$-isometric, where $\varepsilon>0$ is arbitrarily small.
\end{corollary}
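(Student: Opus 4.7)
The first assertion is an immediate application of Theorem~\ref{Th-aud}: if $\mathbb{K}$ is densely valued then $|\mathbb{K}^{\ast}|$ is dense in $(0,\infty)$, so for any nontrivial non-archimedean Banach space $E$ of countable type, fixing any $x\in E$ with $\|x\|>0$ gives $|\mathbb{K}^{\ast}|\cdot\|x\|\subseteq \|E^{\ast}\|$, hence $\|E^{\ast}\|$ is dense in $(0,\infty)$ and Theorem~\ref{Th-aud} yields the almost universal disposition property of $E$.

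For the second assertion, let $E$ and $F$ be two non-archimedean Banach spaces of countable type that are of almost universal disposition for $\mathcal{U}_{FNA}$. By Theorem~\ref{Th-aud}, both $\|E^{\ast}\|$ and $\|F^{\ast}\|$ are dense in $(0,\infty)$. Given $\varepsilon>0$, I will fix $t\in(0,1)$ with $t^{-2}<1+\varepsilon$ and construct $t$-orthogonal bases $(x_n)$ of $E$ and $(y_n)$ of $F$ with $\|x_n\|,\|y_n\|\in[t,1]$ for every $n$. The linear extension $T\colon E\to F$ of the rule $T(x_n):=y_n$ should then provide the desired bijective $\varepsilon$-isometry: since both sequences of basis norms lie in $[t,1]$, the convergence conditions $\|\lambda_n x_n\|\to 0$ and $\|\lambda_n y_n\|\to 0$ both reduce to $|\lambda_n|\to 0$, so $T$ is well defined and linear; its image is closed by the lower bound $\|Tx\|\geq t^2\|x\|$ derived below, and contains each $y_n$, hence equals $F$. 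Combining $t$-orthogonality with the norm bounds gives $t^{2}\max_n|\lambda_n|\leq \|x\|\leq \max_n|\lambda_n|$ for $x=\sum\lambda_n x_n$, and analogous inequalities hold for $\|Tx\|$, so $\|Tx\|/\|x\|\in[t^{2},t^{-2}]\subseteq ((1+\varepsilon)^{-1},1+\varepsilon)$, i.e.\ $T$ is an $\varepsilon$-isometry.

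The main obstacle is the simultaneous construction of the $t$-orthogonal bases with the norms constrained to $[t,1]$. My plan is to follow the procedure implicit in the proof of Theorem~\ref{Th-aud}: inductively, starting from a finite $\sqrt{t}$-orthogonal system $\{x_1,\dots,x_k\}$ in $E$, apply Lemma~\ref{Lem-1} to split $E$ as the sum of its span and a $\sqrt{t}$-orthogonal complement, then use density of the norm set of the complement (inherited from $\|E^{\ast}\|$) to pick a next vector $x_{k+1}$ in that complement with $\|x_{k+1}\|\in[t,1]$, interleaving the steps with a prescribed dense sequence of $E$ in order to guarantee that the resulting sequence has dense span. Running the analogous construction in $F$ delivers the matching base $(y_n)$; the price of $\sqrt{t}$-orthogonality at each step, as in Lemma~\ref{Lem-1}, does not affect the final estimates as long as $t$ was chosen sufficiently close to $1$.
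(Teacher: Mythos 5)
Your first assertion is fine, and your argument for the second assertion is correct \emph{when $\mathbb{K}$ is densely valued}: in that case any $t$-orthogonal base can be rescaled vector by vector (using density of $\left\vert \mathbb{K}^{\ast }\right\vert$) so that all norms lie in $(t,1]$, and your map $T$, with the closed-range argument for surjectivity, is then a bijective $\varepsilon$-isometry once $t^{-2}<1+\varepsilon$.

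The gap is that the second assertion must also cover discretely valued $\mathbb{K}$: by Theorem~\ref{Th-aud} there exist spaces of almost universal disposition over such fields (any $c_{0}(\mathbb{N}:s)$ with $\bigcup_{n}\left\vert \mathbb{K}^{\ast }\right\vert s(n)$ dense), and for these your key step --- producing a $t$-orthogonal base with all norms in $[t,1]$ --- can be impossible. Concretely, over $\mathbb{Q}_{2}$ take $s(n)\in(1/2,1]$ dense with $s(m)=0.7$ for some $m$, and $t=0.99$. Every scalar has absolute value in $2^{\mathbb{Z}}\cup\{0\}$. If $(x_{k})$ were a $t$-orthogonal base with $\|x_{k}\|\in[t,1]$, write $e_{m}=\sum_{k}\lambda_{k}x_{k}$; then $t\sup_{k}\|\lambda_{k}x_{k}\|\leq\|e_{m}\|=0.7\leq\sup_{k}\|\lambda_{k}x_{k}\|$, the supremum is attained at some $k_{0}$, and $\left\vert \lambda_{k_{0}}\right\vert=\|\lambda_{k_{0}}x_{k_{0}}\|/\|x_{k_{0}}\|\in[0.7,\,0.7/t^{2}]\subset(1/2,1)$, which is impossible since $\left\vert \lambda_{k_{0}}\right\vert\in 2^{\mathbb{Z}}$. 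The underlying point is that scalars only move $\|x_{n}\|$ within its coset of $\left\vert \mathbb{K}^{\ast }\right\vert$, whereas the hypothesis supplied by Theorem~\ref{Th-aud} is density of $\|E^{\ast}\|$, a property of the union of many cosets. The repair is to match the two bases to each other rather than both to $[t,1]$: construct $(x_{n})$ in $E$ and $(y_{n})$ in $F$ with $t\leq\|y_{n}\|/\|x_{n}\|\leq 1/t$ (density of the norm sets does provide this), running a back-and-forth between $E$ and $F$ so that both sequences have dense span; surjectivity then comes from the back-and-forth instead of your normalization. A secondary issue: your inductive construction loses an orthogonality factor at each application of Lemma~\ref{Lem-1}, and over infinitely many steps this must be controlled by a convergent product of constants, not merely by taking the initial $t$ close to $1$.
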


However, as the next result shows, they need not be always isometric.

\begin{proposition}
\label{izo} All non-archimedean Banach spaces over $\mathbb{K}$ of almost
universal disposition for the class $\mathcal{U}_{FNA}$ are isometrically
isomorphic if and only if $\mathbb{K}$ is spherically complete and $%
\left\vert \mathbb{K}^{\ast }\right\vert =\left( 0,\infty \right) $.
\end{proposition}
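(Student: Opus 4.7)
The plan is to establish both directions, reading the statement as referring to countable-type spaces, consistent with Theorem~\ref{Th-aud} and Corollary~\ref{cor-1}. The harder direction is the ``only if,'' which I handle by constructing pairs of non-isometric AUD (``almost universal disposition'') spaces whenever the hypothesis on $\mathbb{K}$ fails.

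If $\mathbb{K}$ is spherically complete and $\vert \mathbb{K}^{\ast }\vert =(0,\infty )$, then $\mathbb{K}$ is densely valued, so by Corollary~\ref{cor-1} every countable-type space is AUD. Let $E$ be such a space. It must be infinite-dimensional, since a finite-dimensional space cannot $\varepsilon$-embed finite-dimensional spaces of strictly larger dimension. By spherical completeness and \cite[Lemma~5.5]{Rooij}, $E$ admits an orthogonal base $(e_{n})_{n\in \mathbb{N}}$. Since $\vert \mathbb{K}^{\ast }\vert =(0,\infty )$, I may pick $\lambda _{n}\in \mathbb{K}$ with $\vert \lambda _{n}\vert =\Vert e_{n}\Vert $ and replace $e_{n}$ by $\lambda _{n}^{-1}e_{n}$; this yields an orthogonal base of norm-one vectors, so sending $e_{n}$ to the $n$-th standard unit vector extends to an isometric isomorphism $E\to c_{0}$. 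Thus all such $E$ are pairwise isometric.

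For the converse I argue by contrapositive. Suppose first that $\vert \mathbb{K}^{\ast }\vert \ne (0,\infty )$. The norm spectrum $\{\Vert x\Vert :x\ne 0\}$ is an isometric invariant, and for $c_{0}(\mathbb{N}:s)$ it equals $\bigcup _{n}s(n)\vert \mathbb{K}^{\ast }\vert $. If $\mathbb{K}$ is densely valued, fix $r_{0}\in (0,\infty )\setminus \vert \mathbb{K}^{\ast }\vert $ and compare $c_{0}$ with $c_{0}(\mathbb{N}:s)$ for $s\equiv r_{0}$; Theorem~\ref{Th-aud} makes both AUD, while the cosets $\vert \mathbb{K}^{\ast }\vert $ and $r_{0}\vert \mathbb{K}^{\ast }\vert $ are disjoint, precluding isometry. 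If $\mathbb{K}$ is discretely valued, $\vert \mathbb{K}^{\ast }\vert $ is countable; I choose a countable dense $(r_{n})\subset (0,\infty )$ and then $\alpha \in (0,\infty )\setminus \bigcup _{n}r_{n}\vert \mathbb{K}^{\ast }\vert $ (possible because this union is countable). Letting $s_{1}$ enumerate $(r_{n})$ and $s_{2}$ enumerate $(r_{n})\cup \{\alpha \}$, both $c_{0}(\mathbb{N}:s_{i})$ are AUD by Theorem~\ref{Th-aud}, but the spectrum of the second contains $\alpha $ while the first does not, so they are not isometric.

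Finally suppose $\mathbb{K}$ is not spherically complete while $\vert \mathbb{K}^{\ast }\vert =(0,\infty )$. A standard consequence of non-spherical completeness is the existence of a finite-dimensional normed space $F$ with no orthogonal base: one takes $a\in \widehat{\mathbb{K}}\setminus \mathbb{K}$ and $F=\mathbb{K}+\mathbb{K}a\subset \widehat{\mathbb{K}}$, and any orthogonal base of $F$ would force the non-attained distance $dist(a,\mathbb{K})$ to be attained. Setting $E:=F\oplus c_{0}$ with the max norm makes $E$ a countable-type Banach space in which $F$ sits as a closed subspace; since $F$ has no orthogonal base, \cite[Theorem~5.9]{Rooij} rules out an orthogonal base for $E$. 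On the other hand $c_{0}$ does admit one, so $E\not\cong c_{0}$, while Corollary~\ref{cor-1} guarantees that both are AUD. The principal delicate step is the discretely valued subcase above, where obtaining a strictly larger spectrum relies on the uncountability of the coset space $(0,\infty )/\vert \mathbb{K}^{\ast }\vert $.
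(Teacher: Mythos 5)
Your proof is correct and follows essentially the same strategy as the paper: for the ``if'' direction, spherical completeness plus $\left\vert \mathbb{K}^{\ast }\right\vert =(0,\infty )$ gives every countable-type space an orthonormal base and hence an isometry onto $c_{0}$; for the ``only if'' direction, the non-spherically-complete case is handled by $\mathbb{K}_{v}^{2}\oplus c_{0}$ versus $c_{0}$ (your $F=\mathbb{K}+\mathbb{K}a\subset \widehat{\mathbb{K}}$ is exactly the standard model of $\mathbb{K}_{v}^{2}$, and your appeal to Gruson's theorem makes the paper's ``clearly'' precise), and the case $\left\vert \mathbb{K}^{\ast }\right\vert \neq (0,\infty )$ is handled by comparing norm spectra. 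The one genuine difference is worth highlighting: the paper's treatment of the latter case takes $s\in (0,\infty )\setminus \left\vert \mathbb{K}^{\ast }\right\vert$ and claims that $(c_{0},\Vert \cdot \Vert _{s})$ and $(c_{0},\Vert \cdot \Vert _{\infty })$ are both of almost universal disposition ``by Theorem \ref{Th-aud}''; this is only true when $\mathbb{K}$ is densely valued, since for discretely valued $\mathbb{K}$ the norm spectra of these two spaces are not dense in $(0,\infty )$ and Theorem \ref{Th-aud} says they are \emph{not} of almost universal disposition. Your separate discretely valued subcase --- weight functions with dense range differing by a single extra coset $\alpha \left\vert \mathbb{K}^{\ast }\right\vert$, with $\alpha$ chosen outside the countable union $\bigcup _{n}r_{n}\left\vert \mathbb{K}^{\ast }\right\vert$ --- supplies exactly the construction the paper's argument is missing, so your version is in fact more complete than the published proof. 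The only caveat, which you already flag, is that the proposition must be read as quantifying over countable-type spaces (as the paper implicitly does); taken literally it would also have to account for spaces such as $\ell ^{\infty }$.
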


\begin{proof}
If $\mathbb{K}$ is spherically complete and $\left\vert \mathbb{K}^{\ast
}\right\vert =\left( 0,\infty \right) $ then every non-archimedean Banach
space of countable type has an orthonormal base, thus, it is isometrically
isomorphic with $c_{0}$. Hence, the conclusion follows.

Now assume that $\mathbb{K}$ is non-spherically complete. Then, since $%
\mathbb{K}$ is densely valued, $c_{0}$ and $\mathbb{K}_{v}^{2}\oplus c_{0}$
are both of universal disposition for the class $\mathcal{U}_{FNA}$ by
Theorem \ref{Th-aud} (recall that $\mathbb{K}_{v}^{2}$ is a two-dimensional
normed space without two orthogonal elements, see \cite[Example 2.3.26]{sc1}%
). Clearly, $\mathbb{K}_{v}^{2}\oplus c_{0}$ and $c_{0}$ are not
isometrically isomorphic.

Suppose that $\left\vert \mathbb{K}^{\ast }\right\vert \neq \left( 0,\infty
\right) $. Then we can find $s\in \left( 0,\infty \right) \backslash
\left\vert \mathbb{K}^{\ast }\right\vert $. Define the norm $\left\vert
\left\vert x\right\vert \right\vert _{s}:c_{0}\rightarrow \lbrack 0,\infty )$
by 
\begin{equation*}
\left\vert \left\vert x\right\vert \right\vert _{s}:=\max \{s\cdot
\left\vert x_{1}\right\vert ,\max_{n>1}\{\left\vert x_{n}\right\vert \}\},
\end{equation*}%
and $x=\left( x_{n}\right) \in c_{0}$. Then, by Theorem \ref{Th-aud}, $%
E=\left( c_{0},\left\vert \left\vert .\right\vert \right\vert _{s}\right) $
and $F=\left( c_{0},\left\vert \left\vert .\right\vert \right\vert _{\infty
}\right) $ are of almost universal disposition for the class $\mathcal{U}%
_{FNA}$. Since $\left\vert \left\vert E\right\vert \right\vert \neq
\left\vert \left\vert F\right\vert \right\vert ,$ $E$ and $F$ are not
isometrically isomorphic.
\end{proof}

Now, we are ready to prove Theorem \ref{T-char}, which characterizes $%
\mathbb{K}$, formulated in Introduction.

\begin{proof}[Proof of Theorem \protect\ref{T-char}]
Let $E$ be a non-archimedean Banach space of countable type. If $\mathbb{K}$
is densely valued, $\left\vert \left\vert E^{\ast }\right\vert \right\vert $
is a dense subset of $\left( 0,\infty \right) $ and the conclusion follows
from Theorem \ref{Th-aud}. Assume now that $\mathbb{K}$ is discretely valued
and $\rho $ be a uniformizing element of $\mathbb{K}$. Let $E:=c_{0}$. Set $%
s:=\frac{\left\vert \rho \right\vert +1}{2}$ and take 
\begin{equation*}
\varepsilon <\frac{1-s}{s}=\frac{s-\left\vert \rho \right\vert }{s}.
\end{equation*}%
Let $X=[e_{1}]\subset E$ and $Y=\left( \mathbb{K}^{2},\left\vert \left\vert
.\right\vert \right\vert _{s}\right) $, where 
\begin{equation*}
\left\vert \left\vert x\right\vert \right\vert _{s}:=\max \{\left\vert
x_{1}\right\vert ,s\cdot \left\vert x_{2}\right\vert \}\},\left(
x_{1},x_{2}\right) \in \mathbb{K}^{2}.
\end{equation*}
Define an isometry $i:X\rightarrow Y$ by $i(\lambda e_{1}):=\left( \lambda
x_{1},0\right) $ and assume that there exists an $\varepsilon -$isometry $%
f:Y\rightarrow E$. Then, for $x=\left( 0,1\right) \in Y$ we get 
\begin{equation*}
\left( 1-\varepsilon \right) \cdot s\leq \left\vert \left\vert f\left(
x\right) \right\vert \right\vert \leq \left( 1+\varepsilon \right) \cdot s.
\end{equation*}%
Recall that $\left\vert \left\vert E^{\ast }\right\vert \right\vert =\left\{
\left\vert \rho \right\vert ^{n}:n\in \mathbb{Z}\right\} $, hence $\left(
\left\vert \rho \right\vert ,1\right) \cap \left\vert \left\vert E^{\ast
}\right\vert \right\vert =\varnothing $. But $\left( 1-\varepsilon \right)
\cdot s>\left\vert \rho \right\vert $ and $\left( 1+\varepsilon \right) s<1$%
, a contradiction.
\end{proof}

\section{Non-archimedean Banach space of universal disposition for
finite-dimensional spaces\label{sect-ud}$\protect $}

We start with the proof of Theorem \ref{Pro-iso}, as promised in
Introduction.

\begin{proof}[Proof of Theorem \protect\ref{Pro-iso}]
We need to prove that there is an isometry $T:Y\rightarrow G$ which extends $%
j $. Let $t=\left\vert \left\vert j-f|_{X}\right\vert \right\vert ,$ $%
n_{X}=\dim X$ and $n_{Y}=\dim Y$. Clearly $n_{X}\leq n_{Y}$. Choose $\left\{
x_{1},...,x_{n_{Y}}\right\} $, a $\sqrt{t}-$orthogonal base of $Y$ such that 
$\left[ x_{1},...,x_{n_{X}}\right] =X$. Define $T:Y\rightarrow G$ by setting 
\begin{equation*}
T\left( x_{n}\right) :=\left\{ 
\begin{array}{cc}
j(x_{n}) & \text{ if }n\leq n_{X} \\ 
f\left( x_{n}\right) & \text{if }n>n_{X}%
\end{array}%
\right. ,n=1,...,n_{Y}.
\end{equation*}%
Clearly $T$ extends $j$. We show that $T$ is an isometry. Take $x\in Y$,
written as $x=\sum_{i=1}^{n_{Y}}\lambda _{i}x_{i}$ $\left( \lambda _{i}\in 
\mathbb{K}\right) $. Then, we obtain 
\begin{subequations}
\label{p1}
\begin{multline}
\left\vert \left\vert T(x)\right\vert \right\vert =\left\vert \left\vert
\sum_{i=1}^{n_{X}}\lambda _{i}T(x_{i})+\sum_{i=n_{X}+1}^{n_{Y}}\lambda
_{i}T(x_{i})\right\vert \right\vert  \label{p1a} \\
=\left\vert \left\vert \sum_{i=1}^{n_{X}}\lambda
_{i}j(x_{i})-\sum_{i=1}^{n_{X}}\lambda
_{i}f(x_{i})+\sum_{i=1}^{n_{X}}\lambda
_{i}f(x_{i})+\sum_{i=n_{X}+1}^{n_{Y}}\lambda _{i}f(x_{i})\right\vert
\right\vert \\
=\left\vert \left\vert (j-f)(\sum_{i=1}^{n_{X}}\lambda
_{i}x_{i})+f(\sum_{i=1}^{n_{Y}}\lambda _{i}x_{i})\right\vert \right\vert
=\left\vert \left\vert (j-f)(x_{0})+f\left( x\right) \right\vert \right\vert
,  \notag
\end{multline}%
where $x_{0}=\sum_{i=1}^{n_{X}}\lambda _{i}x_{i}\in X$. But, 
\end{subequations}
\begin{equation*}
\left\vert \left\vert x\right\vert \right\vert \geq \sqrt{t}\cdot \max
\left\{ \left\vert \left\vert x_{0}\right\vert \right\vert
,\sum_{i=n_{X}+1}^{n_{Y}}\lambda _{i}x_{i}\right\} \geq \sqrt{t}\cdot
\left\vert \left\vert x_{0}\right\vert \right\vert
\end{equation*}%
and%
\begin{equation*}
\left\vert \left\vert (j-f)(x_{0})\right\vert \right\vert \leq \left\vert
\left\vert j-f|_{X}\right\vert \right\vert \cdot \left\vert \left\vert
x_{0}\right\vert \right\vert =t\cdot \left\vert \left\vert x_{0}\right\vert
\right\vert \leq \sqrt{t}\cdot \left\vert \left\vert x\right\vert
\right\vert <\left\vert \left\vert x\right\vert \right\vert =\left\vert
\left\vert f\left( x\right) \right\vert \right\vert
\end{equation*}%
hence, we get%
\begin{equation*}
\left\vert \left\vert T(x)\right\vert \right\vert =\left\vert \left\vert
(j-f)(x_{0})+f\left( x\right) \right\vert \right\vert =\left\vert \left\vert
f\left( x\right) \right\vert \right\vert =\left\vert \left\vert x\right\vert
\right\vert .
\end{equation*}
\end{proof}

To prove the main result of this section we need a few technical lemmas (see
also \cite[Exercise 5.B and Lemma 4.42]{Rooij} and \cite[Theorem 2.3.16]{sc1}%
).

\begin{lemma}
\label{L-ort}Let $0<t\leq 1$ and let $\left\{ x_{1},...,x_{n}\right\} $ be a 
$t-$orthogonal set in a non-archimedean normed space $E$. If $\left\{
z_{1},...,z_{n}\right\} \subset E$ and $\left\vert \left\vert
x_{i}-z_{i}\right\vert \right\vert <t\cdot \left\vert \left\vert
x_{i}\right\vert \right\vert $ for each $i\in \left\{ 1,...,n\right\} $,
then $\left\{ z_{1},...,z_{n}\right\} $ is also $t-$orthogonal.
\end{lemma}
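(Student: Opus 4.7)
My plan is to prove this by a direct perturbation argument that exploits the strong triangle inequality twice.

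First I would establish the basic observation that $\|z_i\| = \|x_i\|$ for every $i$. Indeed, $z_i = x_i - (x_i - z_i)$ with $\|x_i - z_i\| < t \cdot \|x_i\| \leq \|x_i\|$, so by the ultrametric inequality $\|z_i\| = \|x_i\|$; in particular $\|\lambda_i z_i\| = \|\lambda_i x_i\|$ for every $\lambda_i \in \mathbb{K}$. (If some $x_i = 0$, then the hypothesis forces $z_i = 0$, so this remains valid.)

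Next, fix $\lambda_1, \dots, \lambda_n \in \mathbb{K}$ and write
\begin{equation*}
\sum_{i=1}^n \lambda_i z_i \;=\; \sum_{i=1}^n \lambda_i x_i \;-\; \sum_{i=1}^n \lambda_i (x_i - z_i).
\end{equation*}
I would bound the perturbation term using the strong triangle inequality and the hypothesis:
\begin{equation*}
\left\|\sum_{i=1}^n \lambda_i (x_i - z_i)\right\| \;\leq\; \max_{i} |\lambda_i|\cdot \|x_i - z_i\| \;<\; t \cdot \max_{i} \|\lambda_i x_i\|.
\end{equation*}
By $t$-orthogonality of $\{x_1,\dots,x_n\}$, the right-hand side is at most $\left\|\sum_i \lambda_i x_i\right\|$, so the perturbation term is strictly smaller in norm than $\sum_i \lambda_i x_i$.

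Applying the ultrametric inequality once more (in its sharpened form: if $\|a\| < \|b\|$ then $\|b - a\| = \|b\|$), I would conclude
\begin{equation*}
\left\|\sum_{i=1}^n \lambda_i z_i\right\| \;=\; \left\|\sum_{i=1}^n \lambda_i x_i\right\| \;\geq\; t \cdot \max_{i}\|\lambda_i x_i\| \;=\; t \cdot \max_{i}\|\lambda_i z_i\|,
\end{equation*}
which is exactly $t$-orthogonality of $\{z_1,\dots,z_n\}$. The only potential obstacle is the degenerate case when $\sum_i \lambda_i x_i = 0$, but then $t$-orthogonality of the $x_i$ forces all $\lambda_i x_i = 0$, so all terms on both sides vanish and the inequality is trivial. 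Overall the argument is a clean two-step application of the strong triangle inequality.
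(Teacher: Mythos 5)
Your proof is correct and follows essentially the same route as the paper's: establish $\Vert z_i\Vert=\Vert x_i\Vert$, bound the perturbation $\sum_i\lambda_i(x_i-z_i)$ strictly below $t\cdot\max_i\Vert\lambda_i x_i\Vert\leq\Vert\sum_i\lambda_i x_i\Vert$, and apply the sharpened ultrametric equality. Your explicit handling of the degenerate case $\sum_i\lambda_i x_i=0$ is a small point of extra care that the paper leaves implicit.
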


\begin{proof}
Take any $\lambda _{i}\in \mathbb{K},$ $i=1,...,n$. Since $\left\vert
\left\vert x_{i}-z_{i}\right\vert \right\vert <t\cdot \left\vert \left\vert
x_{i}\right\vert \right\vert $, we have $\left\vert \left\vert
z_{i}\right\vert \right\vert =\left\vert \left\vert x_{i}\right\vert
\right\vert $ for each $i\in \left\{ 1,...,n\right\} .$ Consequently we note 
\begin{equation*}
\left\vert \left\vert \lambda _{1}x_{1}+...+\lambda _{n}x_{n}\right\vert
\right\vert \geq t\cdot \max_{i=1,...,n}\left\vert \left\vert \lambda
_{i}x_{i}\right\vert \right\vert
\end{equation*}
and

\begin{equation*}
\left\vert \left\vert \lambda _{1}(z_{1}-x_{1})+...+\lambda
_{n}(z_{n}-x_{n})\right\vert \right\vert \leq \max_{i=1,...,n}\left\vert
\left\vert \lambda _{i}(z_{i}-x_{i})\right\vert \right\vert <t\cdot
\max_{i=1,...,n}\left\vert \left\vert \lambda _{i}x_{i}\right\vert
\right\vert .
\end{equation*}%
Hence%
\begin{multline*}
\left\vert \left\vert \lambda _{1}z_{1}+...+\lambda _{n}z_{n}\right\vert
\right\vert =\left\vert \left\vert \lambda _{1}z_{1}+...+\lambda
_{n}z_{n}-\left( \lambda _{1}x_{1}+...+\lambda _{n}x_{n}\right) +\left(
\lambda _{1}x_{1}+...+\lambda _{n}x_{n}\right) \right\vert \right\vert \\
=\left\vert \left\vert \lambda _{1}(z_{1}-x_{1})+...+\lambda
_{n}(z_{n}-x_{n})+\left( \lambda _{1}x_{1}+...+\lambda _{n}x_{n}\right)
\right\vert \right\vert \\
=\left\vert \left\vert \lambda _{1}x_{1}+...+\lambda _{n}x_{n}\right\vert
\right\vert \geq t\cdot \max_{i=1,...,n}\left\vert \left\vert \lambda
_{i}x_{i}\right\vert \right\vert =t\cdot \max_{i=1,...,n}\left\vert
\left\vert \lambda _{i}z_{i}\right\vert \right\vert,
\end{multline*}%
and we are done.
\end{proof}

\begin{lemma}
\label{ext-isom}Let $E,F$ be non-archimedean normed spaces, $D$ a linear
subspace of $E$ such that $E$ is an immediate extension of $D$, let $F$ be
spherically complete and $T:D\rightarrow F$ be an isometry. Then $T$ can be
extended to a linear isometry $T^{\prime }:E\rightarrow F$.
\end{lemma}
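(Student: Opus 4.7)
The plan is to apply Zorn's lemma to the poset of all linear isometries $T' \colon D' \to F$ extending $T$ with $D \subseteq D' \subseteq E$, ordered by extension. Every chain has an upper bound given by the union of domains, so a maximal such $T'$ exists. I would then aim to show that $D' = E$. Suppose not, and pick $x_0 \in E \setminus D'$. First I would note that $E$ is also an immediate extension of $D'$: any nonzero $x \in E$ orthogonal to $D'$ would, a fortiori, be orthogonal to $D \subseteq D'$, contradicting the immediacy of $E$ over $D$.

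The strategy is to find $y_0 \in F$ such that the formula $e + \lambda x_0 \mapsto T'(e) + \lambda y_0$ defines a linear isometry from $D' + \mathbb{K} x_0$ into $F$; this would contradict maximality of $T'$. A direct calculation shows that this map is isometric precisely when
\begin{equation*}
\|y_0 - T'(e)\|_F = \|x_0 - e\|_E \quad \text{for every } e \in D'.
\end{equation*}
To locate a candidate $y_0$, I would consider the closed balls $B_e := \{y \in F : \|y - T'(e)\|_F \leq \|x_0 - e\|_E\}$. Since $T'$ is an isometry, for any $e_1, e_2 \in D'$ one has
\begin{equation*}
\|T'(e_1) - T'(e_2)\|_F = \|e_1 - e_2\|_E \leq \max\{\|x_0 - e_1\|_E, \|x_0 - e_2\|_E\},
\end{equation*}
so $T'(e_1) \in B_{e_2}$ or vice versa; hence $\{B_e\}_{e \in D'}$ is a pairwise non-disjoint family of non-archimedean balls, which forces it to be a nested family. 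Spherical completeness of $F$ then yields a point $y_0 \in \bigcap_{e \in D'} B_e$, giving $\|y_0 - T'(e)\|_F \leq \|x_0 - e\|_E$ for every $e \in D'$.

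The main obstacle, and the step where immediacy is used decisively, is upgrading this inequality to equality. Suppose for contradiction that $\|y_0 - T'(e_0)\|_F < a := \|x_0 - e_0\|_E$ for some $e_0 \in D'$. Since $E$ is an immediate extension of $D'$, $\mathrm{dist}(x_0, D') < a$, so I can choose $e_1 \in D'$ with $\|x_0 - e_1\|_E < a$. The ultrametric ``isosceles'' principle in $E$ applied to $(x_0 - e_0) - (x_0 - e_1) = e_1 - e_0$ with two summands of different norm yields $\|e_0 - e_1\|_E = a$. On the $F$-side, however, both $\|y_0 - T'(e_0)\|_F$ and $\|y_0 - T'(e_1)\|_F \leq \|x_0 - e_1\|_E$ are strictly less than $a$, so
\begin{equation*}
\|T'(e_0) - T'(e_1)\|_F \leq \max\{\|y_0 - T'(e_0)\|_F, \|y_0 - T'(e_1)\|_F\} < a,
\end{equation*}
contradicting $\|T'(e_0) - T'(e_1)\|_F = \|e_0 - e_1\|_E = a$. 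Hence equality holds for all $e \in D'$, the desired one-dimensional extension exists, and maximality of $T'$ forces $D' = E$, completing the proof.
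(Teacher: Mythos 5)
Your proof is correct, but it takes a genuinely different route from the paper. The paper's argument is very short: it invokes Ingleton's extension theorem (the non-archimedean Hahn--Banach theorem for spherically complete range spaces, \cite[Theorem 4.8]{Rooij}) to produce a norm-non-increasing linear extension $T'\colon E\to F$ in one stroke, and then uses immediacy only once, via the isosceles principle, to show that this $T'$ is automatically isometric: for $x\in E\setminus D$ pick $x_d\in D$ with $\Vert x-x_d\Vert<\Vert x_d\Vert=\Vert x\Vert$, so that $\Vert T'(x)-T'(x_d)\Vert<\Vert T'(x_d)\Vert$ forces $\Vert T'(x)\Vert=\Vert T'(x_d)\Vert=\Vert x\Vert$. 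You instead run a Zorn's lemma induction and, at each one-dimensional step, essentially re-prove the relevant case of Ingleton's theorem by hand: the balls $B_e$ pairwise intersect, hence are nested, and spherical completeness yields a common point $y_0$; immediacy then enters at a different place, to upgrade $\Vert y_0-T'(e)\Vert\leq\Vert x_0-e\Vert$ to an equality. All the individual steps check out (well-definedness of the one-dimensional extension since $x_0\notin D'$, the observation that $E$ remains immediate over every intermediate $D'$, and the isosceles computation). The trade-off is clear: the paper's proof is two lines modulo a cited theorem, while yours is self-contained modulo two standard ultrametric facts that you use silently --- that intersecting balls are nested, and that spherical completeness (stated in the paper for shrinking \emph{sequences}) implies the nonempty intersection of an arbitrary \emph{chain} of balls, which is what your possibly uncountable family $\{B_e\}_{e\in D'}$ requires. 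Both facts are standard and easily supplied, so this is a presentational remark rather than a gap.
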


\begin{proof}
Applying Ingleton's theorem (see \cite[Theorem 4.8]{Rooij}) we can extend
the isometry $T:D\rightarrow F$ to the linear operator $T^{\prime
}:E\rightarrow F$ such that $\left\vert \left\vert T^{\prime }\right\vert
\right\vert \leq 1$. We prove that $T^{\prime }$ is also an isometry.

Set $x\in E\backslash D.$ Then, since $E$ is an immediate extension of $D$,
there is $x_{d}\in D$ such that 
\begin{equation*}
\left\vert \left\vert x-x_{d}\right\vert \right\vert <\left\vert \left\vert
x_{d}\right\vert \right\vert =\left\vert \left\vert x\right\vert \right\vert.
\end{equation*}
Thus we get%
\begin{equation*}
\left\vert \left\vert T^{\prime }\left( x\right) -T^{\prime }\left(
x_{d}\right) \right\vert \right\vert \leq \left\vert \left\vert T^{\prime
}\right\vert \right\vert \cdot \left\vert \left\vert x-x_{d}\right\vert
\right\vert <\left\vert \left\vert x_{d}\right\vert \right\vert =\left\vert
\left\vert T^{\prime }\left( x_{d}\right) \right\vert \right\vert
\end{equation*}%
and%
\begin{equation*}
\left\vert \left\vert T^{\prime }\left( x\right) \right\vert \right\vert
=\left\vert \left\vert T^{\prime }\left( x\right) -T^{\prime }\left(
x_{d}\right) +T^{\prime }\left( x_{d}\right) \right\vert \right\vert
=\left\vert \left\vert T^{\prime }\left( x_{d}\right) \right\vert
\right\vert .
\end{equation*}%
Hence, finally $\left\vert \left\vert T^{\prime }\left( x\right) \right\vert
\right\vert =\left\vert \left\vert x\right\vert \right\vert $.
\end{proof}

\begin{lemma}
\label{nowy}Let $Y$ be a finite-dimensional non-archimedean normed space and 
$X$ be its linear subspace. Let $\left\{ u_{1},...u_{m_{Y}}\right\} $ be a
maximal orthogonal set in $Y$ such that $\left\{ u_{1},...,u_{m_{X}}\right\} 
$ is a maximal orthogonal set in $X$ for some $m_{X}\leq m_{Y}$ and let $F_{Y}:=%
\left[ u_{m_{X}+1},...,u_{m_{Y}}\right] $. Then, $F_{Y}\perp X$.
\end{lemma}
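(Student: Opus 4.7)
The plan is to fix arbitrary $x \in X$ and $y \in F_Y$ and verify the identity $\|x+y\| = \max\{\|x\|, \|y\|\}$. The strong triangle inequality already provides the upper bound and, by the standard argument, handles the case $\|x\| \neq \|y\|$, where unequal norms force equality. The substantive case is therefore $\|x\| = \|y\|$, in which I must prove $\|x+y\| \geq \|y\|$; equivalently, $\|y - x'\| \geq \|y\|$ for every $x' \in X$ (taking $x' = -x$).

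The first ingredient is the elementary observation that every $y \in F_Y$ is orthogonal to $D_X := [u_1, \ldots, u_{m_X}]$, since the ambient set $\{u_1, \ldots, u_{m_Y}\}$ is orthogonal in $Y$; expanding $y = \sum_{j>m_X} \mu_j u_j$ and $d = \sum_{i\leq m_X} \lambda_i u_i$ and using the orthogonality of the full set, one sees $\|y - d\| \geq \|y\|$ for every $d \in D_X$. What I would like is to upgrade this inequality to the larger subspace $X$. This is where the maximality hypothesis enters, via the following reformulation that I will record first: $\{u_1, \ldots, u_{m_X}\}$ being maximal orthogonal in $X$ is equivalent to $X$ being an immediate extension of $D_X$, since any nonzero $w \in X$ orthogonal to $D_X$ could be appended to $\{u_1, \ldots, u_{m_X}\}$ to produce a strictly larger orthogonal set, contradicting maximality.

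Granting this reformulation, I argue by contradiction. Suppose $\|y + x\| < \|y\|$ for some $x \in X$ (with $\|x\| = \|y\|$), and set $x' := -x$. The strong triangle inequality then forces $\|x'\| = \|y\|$, and $x'$ cannot lie in $D_X$ (else $\|y - x'\| \geq \|y\|$ by the orthogonality of $y$ to $D_X$). The immediate-extension property, applied to $x' \in X \setminus D_X$, supplies a $d \in D_X$ with $\|x' - d\| < \|x'\| = \|y\|$. Combining via the strong triangle inequality gives $\|y - d\| \leq \max\{\|y - x'\|, \|x' - d\|\} < \|y\|$, contradicting $y \perp D_X$. Hence $\|x + y\| \geq \|y\|$, completing the orthogonality.

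I expect the only conceptual obstacle to be the structural translation of maximality of an orthogonal subset into the immediate-extension property; once that dictionary is in place, the rest reduces to two applications of the strong triangle inequality. In particular, no spherical completeness assumption on $\mathbb{K}$ enters the argument.
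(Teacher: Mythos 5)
Your proof is correct and follows essentially the same route as the paper's: both arguments rest on the orthogonality $F_Y\perp[u_1,\dots,u_{m_X}]$ coming from the full orthogonal set, together with the fact that maximality makes $X$ an immediate extension of $D_X=[u_1,\dots,u_{m_X}]$, followed by a small-perturbation step (the paper writes $\|x+y\|=\|(x-x_0)+(x_0+y)\|=\|x_0+y\|$ directly, you phrase the same estimate as a contradiction in the equal-norm case). The only addition on your side is that you explicitly justify the dictionary between maximality of the orthogonal set and the immediate-extension property, which the paper uses without comment.
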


\begin{proof}
Assume that $m_{Y}>m_{X}$ (otherwise nothing is to prove). Take any $x\in X$
and $y\in F_{Y}$. If $x\in \left[ u_{1},...,u_{m_{X}}\right] $, the
conclusion is obvious. So, assume that $x\notin \left[ u_{1},...,u_{m_{X}}%
\right] $. But then, since $X$ is an immediate extension of $\left[
u_{1},...,u_{m_{X}}\right] $, there is $x_{0}\in \left[ u_{1},...,u_{m_{X}}%
\right] $ with 
\begin{equation*}
\left\vert \left\vert x-x_{0}\right\vert \right\vert <\left\vert \left\vert
x\right\vert \right\vert =\left\vert \left\vert x_{0}\right\vert \right\vert
.
\end{equation*}%
Thus, since 
\begin{equation*}
\left\vert \left\vert x_{0}+y\right\vert \right\vert =\max \left\{
\left\vert \left\vert x_{0}\right\vert \right\vert ,\left\vert \left\vert
y\right\vert \right\vert \right\} ,
\end{equation*}%
we have 
\begin{equation*}
\left\vert \left\vert x+y\right\vert \right\vert =\left\vert \left\vert
x-x_{0}+x_{0}+y\right\vert \right\vert =\left\vert \left\vert
x_{0}+y\right\vert \right\vert =\max \left\{ \left\vert \left\vert
x_{0}\right\vert \right\vert ,\left\vert \left\vert y\right\vert \right\vert
\right\} =\max \left\{ \left\vert \left\vert x\right\vert \right\vert
,\left\vert \left\vert y\right\vert \right\vert \right\} .
\end{equation*}
\end{proof}

Let $r:=\left\vert \rho \right\vert $ if $\mathbb{K}$ is discretely valued,
where $\rho \in \mathbb{K}$ is a uniformizing element of $\left\vert 
\mathbb{K}^{\ast }\right\vert $ with $0<\left\vert \rho \right\vert <1$, and
let $r$ be any number taken from $\left( \frac{1}{2},1\right) $ if $\mathbb{K%
}$ is densely valued. Note that $\left( 0,\infty \right) $ is a
multiplicative group. Let 
\begin{equation}
\pi :\left( 0,\infty \right) \rightarrow G:=\left( 0,\infty \right)
/\left\vert \mathbb{K}^{\ast }\right\vert  \label{map-pi}
\end{equation}%
be the quotient map and let $S=\left\{ s_{g}:g\in G\right\} $ be a set of
representatives of elements of $G$ in $(r,1],$ i.e. $\pi \left( s_{g}\right)
=g$.

Let $I_{u}$ be a set for which $\card\left( I_{u}\right) =\max \left\{ \aleph
_{0},\card(G)\right\} $ and let $I_{u}=\bigcup_{g\in G}I_{g}$ where $\left\{
I_{g}:g\in G\right\} $ is a partition of $I_{u}$ such that $\card\left(
I_{g}\right) =\aleph _{0}$ for each $g\in G$. Then, clearly $c_{0}\left(
I_{u}\right) =\bigoplus_{g\in G}c_{0}\left( I_{g}\right) $.

Define the function $s:I_{u}\mathbb{\rightarrow }(r,1]$ by $h\left( i\right)
:=s_{g}$ if $i\in I_{g}$ and the norm on $c_{0}\left( I_{u}\right) $ by 
\begin{equation*}
\left\vert \left\vert x\right\vert \right\vert _{u}:=\max_{i\in
I_{u}}\left\{ s\left( i\right) \cdot \left\vert x_{i}\right\vert \right\} ,%
\text{ }x=\left( x_{i}\right) _{i\in I}\in c_{0}\left( I_{u}\right) .
\end{equation*}%
Denote $E_{u}:=\left( c_{0}\left( I_{u}\right) ,\left\vert \left\vert
.\right\vert \right\vert _{u}\right) $.

\begin{proposition}
\label{P-univers}Let $\mathbb{K}$ be spherically complete. Then every
non-archimedean Banach space of countable type can be isometrically embedded
into $E_{u}$.

\begin{itemize}
\item $E_{u}$ is of countable type (hence $E_{u}$ is isometrically universal
for the class of non-archimedean Banach spaces of countable type) if and
only if $\left( 0,\infty \right) $ is the union of at most countably many
cosets of $\left\vert \mathbb{K}^{\ast }\right\vert $;

\item $E_{u}$ is never separable.
\end{itemize}
\end{proposition}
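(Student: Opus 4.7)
My plan is to split the proposition into its three claims and verify each separately. For the embedding claim, since $\mathbb{K}$ is spherically complete, every non-archimedean Banach space $E$ of countable type admits an orthogonal base $\{e_n\}_{n \in \mathbb{N}}$ with $\|e_n\| = t(n)$. Each $t(n)$ lies in a unique coset $g_n \in G$, so I can pick $\mu_n \in \mathbb{K}^\ast$ with $|\mu_n| = t(n)/s_{g_n}$. Because each block $I_g$ of the partition is countably infinite, the fibers $\{n : g_n = g\}$ can be embedded injectively into $I_g$, yielding distinct indices $i_n \in I_{g_n}$. Setting $T(e_n) := \mu_n \hat e_{i_n}$, where $\hat e_i$ is the canonical unit vector in $E_u$, one computes for $x = \sum_n \alpha_n e_n$ that $\|T(x)\|_u = \max_n |\alpha_n \mu_n| \, s(i_n) = \max_n |\alpha_n|\, t(n) = \|x\|$, so $T$ is an isometric embedding, and the $c_0$ condition transfers automatically.

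For the first bullet, $E_u$ carries $\{\hat e_i\}_{i \in I_u}$ as an orthogonal base, hence is of countable type iff $|I_u| \leq \aleph_0$, which by construction is equivalent to $|G| \leq \aleph_0$, i.e.\ to $(0,\infty)$ being a union of at most countably many cosets of $|\mathbb{K}^\ast|$; the universality clause then follows from the embedding just constructed.

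The delicate point is the second bullet. I would split into cases. If $|G|$ is uncountable, selecting one $i_g \in I_g$ per coset produces an uncountable family $\{\hat e_{i_g}\}_{g \in G}$ with $\|\hat e_{i_g} - \hat e_{i_{g'}}\|_u = \max\{s_g, s_{g'}\} > r$ whenever $g \ne g'$, ruling out separability. If $|G| \leq \aleph_0$, the partition $(0,\infty) = \bigcup_{g \in G} s_g |\mathbb{K}^\ast|$ into at most countably many translates of $|\mathbb{K}^\ast|$ forces $|\mathbb{K}^\ast|$ to be uncountable. I would then invoke the following elementary fact: by the strong triangle inequality, any $\lambda \in \mathbb{K}^\ast$ and any $d \in \mathbb{K}$ with $|\lambda - d| < |\lambda|$ satisfy $|d| = |\lambda|$. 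Hence a countable dense subset of $\mathbb{K}$ would yield a countable image $|\mathbb{K}^\ast|$, contradicting the previous line. So $\mathbb{K}$ itself is non-separable, and the one-dimensional subspace $\mathbb{K} \hat e_i$ of $E_u$ (for any fixed $i$) inherits non-separability.

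I expect the main obstacle to be exactly this non-separability argument in the regime $|G| \leq \aleph_0$, since it is not intrinsic to $E_u$ but has to be traced back to a cardinality statement about the ground field via the value group. The remaining claims are direct consequences of the spherical completeness hypothesis (which provides genuine orthogonal bases) and the fact that the weights $s(i)$ were chosen to meet every coset of $|\mathbb{K}^\ast|$ in $(r,1]$.
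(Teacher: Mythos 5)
Your argument for the isometric embedding and for the first bullet is essentially the paper's: take an orthogonal base (available since $\mathbb{K}$ is spherically complete), sort its vectors by the coset of their norm, inject each fiber into the corresponding countably infinite block $I_g$, and rescale by a $\mu_n$ with $|\mu_n|=t(n)/s_{g_n}$ (which lies in $|\mathbb{K}^\ast|$ precisely because $s_{g_n}$ represents the coset of $t(n)$). For the ``only if'' half of the first bullet the paper argues slightly differently --- it notes that a countable orthogonal base would force $\Vert E_u\Vert_u$ to meet only countably many cosets and then exhibits a two-dimensional space that could not embed --- whereas you appeal to the standard fact that an orthogonal family in a space of countable type is countable; both are fine. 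The genuine divergence is in the second bullet. The paper assumes $\mathbb{K}$ separable, disposes of the discretely valued case by counting cosets, and kills the densely valued case by citing Schikhof's theorem that no densely valued spherically complete field is separable. You instead split on $\card(G)$: when $G$ is uncountable you produce an uncountable $r$-separated family of unit vectors, and when $G$ is countable you observe that countably many cosets, each of cardinality $\card(|\mathbb{K}^\ast|)$, cannot cover the continuum unless $|\mathbb{K}^\ast|$ is uncountable, and then the isosceles-triangle principle shows a separable $\mathbb{K}$ has countable value group --- so $\mathbb{K}$, and hence its isometric copy $\mathbb{K}\hat e_i\subset E_u$, is non-separable. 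Your route is self-contained and elementary where the paper leans on an external theorem, and it also makes explicit the (trivial but omitted in the paper) reduction to the case of separable $\mathbb{K}$; what the citation buys the paper is brevity. Both arguments are correct.
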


\begin{proof}
Let $E$ be a non-archimedean Banach space of countable type. Since $\mathbb{K%
}$ is spherically complete, $E$ has an orthogonal base $\left( x_{n}\right) $
(see \cite[Theorem 2.3.25]{sc1}). Let%
\begin{equation*}
J_{g}=\left\{ n:\pi \left( \left\vert \left\vert x_{n}\right\vert
\right\vert \right) =g\right\} ,\text{ \ }g\in G,
\end{equation*}%
where $\pi $ is the map defined in (\ref{map-pi}). Then $G_{0}=\{g\in G:$ \ $%
J_{g}$ is nonempty$\}$ is at most countable. So, we can write $\mathbb{N=}%
\bigcup_{g\in G_{0}}J_{g}$, where $J_{g}$ $\left( g\in G_{0}\right) $ are
nonempty, finite or infinite, pairwise disjoint subsets of $\mathbb{N}$. 

Define the map $l:\mathbb{N\rightarrow }I_{u}$ (recall that $%
I_{u}=\bigcup_{g\in G}I_{g}$ and $I_{g}$ is countable for every $g\in G$;
thus we can write $I_{g}=\left\{ m_{1}^{g},m_{2}^{g},...\right\} $, $g\in G$%
) as follows: for every $n\in \mathbb{N}$ there exist $g\in G_{0}$ and $k\in 
\mathbb{N}$ such that $n=n_{k}^{g}$ (what means $n\in J_{g}$). Finally set $%
l\left( n\right) :=m_{k}^{g}$. 

Note that for every $n\in \mathbb{N}$ we can find $\lambda _{n}\in \mathbb{K}
$ for which 
\begin{equation*}
\left\vert \left\vert x_{n}\right\vert \right\vert =s_{g}\cdot \left\vert
\lambda _{n}\right\vert .
\end{equation*}

Next, define the map $i_{0}:\left\{ x_{1},x_{2},...\right\} \rightarrow E_{u}
$ by the formula $x_{n}\longmapsto \lambda _{n}e_{l\left( n\right) }$, where 
$e_{n}$ are as usual the unit vectors. Since 
\begin{equation*}
\left\vert \left\vert \lambda _{n}e_{l\left( n\right) }\right\vert
\right\vert _{u}=s_{g}\cdot \left\vert \lambda _{n}\right\vert =\left\vert
\left\vert x_{n}\right\vert \right\vert ,
\end{equation*}%
we can extend the map $i_{0}$ to an isometric embedding $E\rightarrow E_{u}$.

Now we prove the next claim of the proposition. Suppose that $\left(
0,\infty \right) $ is not the union of at most countably many cosets of $%
\left\vert \mathbb{K}^{\ast }\right\vert $ and assume that $E_{u}$ is of
countable type. By \cite[Theorem 2.3.25]{sc1} the space $E_{u}$ has an
orthogonal base $\left( x_{n}\right) $. Since $\left( x_{n}\right) $ is
orthogonal, $\left\vert \left\vert E_{u}\right\vert \right\vert
_{u}\backslash \left\{ 0\right\} $ consist of at most countably many cosets
of $\left\vert \mathbb{K}^{\ast }\right\vert $. Hence there exists 
\begin{equation*}
s\in \left( 0,\infty \right) \backslash \left\vert \left\vert
E_{u}\right\vert \right\vert _{u}.
\end{equation*}%
Define $E=\left( \mathbb{K}^{2},\left\vert \left\vert .\right\vert
\right\vert _{s}\right) ,$ where 
\begin{equation*}
\left\vert \left\vert (x,y)\right\vert \right\vert _{s}:=\max \{\left\vert
x\right\vert ,s\cdot \left\vert y\right\vert \},
\end{equation*}%
$(x,y)\in \mathbb{K}^{2}.$ Then 
\begin{equation*}
\left\vert \left\vert (0,1)\right\vert \right\vert _{s}\notin \left\vert
\left\vert E_{u}\right\vert \right\vert _{u}.
\end{equation*}%
Hence there is no isometry $E\rightarrow E_{u}$, a contradiction. If $\left(
0,\infty \right) $ is the union of at most countably many cosets of $%
\left\vert \mathbb{K}^{\ast }\right\vert $, then $G$, and consequently $I_{u}
$ is countable, and $E_{u}$ is of countable type.

Finally, assume that $\mathbb{K}$\ is separable. If $\mathbb{K}$ is
discretely valued, then $\left( 0,\infty \right) $ is always the union of
more than countably many cosets of $\left\vert \mathbb{K}^{\ast }\right\vert
.$ On the other hand, by \cite[Theorem 20.5]{Schikhof-UC} there is no
separable densely valued spherically complete $\mathbb{K};$ hence $E_{u}$ is
not separable.
\end{proof}

\begin{remark}
\label{rem-univ}If $\mathbb{K}$ is non-spherically complete, then $E_{u}$
does not contain an isometric image of any non-archimedean Banach spaces of
countable type. Indeed, in this case there exists finite-dimensional normed
spaces without orthogonal bases, see \cite[Example 2.3.26]{sc1} and \cite%
{AK-Cp}. Take $E=\mathbb{K}_{v}^{2}$, where $\mathbb{K}_{v}^{2}$ is a
two-dimensional normed space over $\mathbb{K}$ without two non-zero
orthogonal elements, and assume that there exists an isometric embedding $%
i:E\rightarrow E_{u}$. Then the image $i\left( E\right) $ has no two
non-zero orthogonal elements. But this contradicts the conclusion of
Gruson's theorem (\cite[Theorem 5.9]{Rooij}) stating that every linear
subspace of a non-archimedean Banach spaces with an orthogonal base has an
orthogonal base.

If $\mathbb{K}$ is non-spherically complete the role of $c_{0}\left(
I:s\right) $ takes the space $\ell^{\infty }$. In this case, by \cite[Theorem
2.5.13]{sc1}, every\emph{\ }non-archimedean Banach space of countable type
can be isometrically embedded into $\ell^{\infty }$.
\end{remark}

Finally we prove the following

\begin{theorem}
\label{Prop-ud} The spherical completion $\widehat{E_{u}}$ of $E_{u}$ is a
non-archimedean Banach space of universal disposition for the class $%
\mathcal{U}_{FNA}$.
\end{theorem}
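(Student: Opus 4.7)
The plan is to verify the hypothesis of Theorem~\ref{Pro-iso}: for every finite-dimensional non-archimedean normed space $Y$, every linear subspace $X\subset Y$, and every isometric embedding $j:X\to\widehat{E_u}$, one needs an isometry $f:Y\to\widehat{E_u}$ with $\|j-f|_X\|<1$. I will in fact arrange $f|_X=j$ exactly, by a two-stage construction: first enlarge $j$ to an isometric embedding $\tilde{j}$ of an intermediate subspace $X+F_Y\subset Y$, and then use the spherical completeness of $\widehat{E_u}$ through Lemma~\ref{ext-isom} to cover the remaining immediate extension up to $Y$.

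Pick a maximal orthogonal set $\{u_1,\dots,u_k\}$ in $X$ and extend it to a maximal orthogonal set $\{u_1,\dots,u_m\}$ in $Y$. Set $X_0=[u_1,\dots,u_k]$, $F_Y=[u_{k+1},\dots,u_m]$, and $Y_0=X_0+F_Y$. By maximality, $X$ is an immediate extension of $X_0$ and $Y$ of $Y_0$; by Lemma~\ref{nowy}, $F_Y\perp X$; and since $Y_0\subset X+F_Y\subset Y$, the space $Y$ is also an immediate extension of $X+F_Y$.

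The heart of the argument is the inductive construction, for $i=k+1,\dots,m$, of vectors $v_i\in\widehat{E_u}$ with $\|v_i\|=\|u_i\|$ such that $\{j(u_1),\dots,j(u_k),v_{k+1},\dots,v_i\}$ is orthogonal in $\widehat{E_u}$. Denote by $W_i$ the span of the previously chosen orthogonal vectors. Since $\widehat{E_u}$ is an immediate extension of $E_u$ and finitely-supported vectors are dense in $E_u=c_0(I_u:s)$, each basis vector of $W_i$ is approximable by a finitely-supported element of $E_u$ within relative error less than some fixed $t<1$; let $J\subset I_u$ be the finite union of the supports of these approximations. Writing $g_i:=\pi(\|u_i\|)\in G$, the countably infinite set $I_{g_i}$ admits an index $j'\notin J$, and I put $v_i:=\lambda_i e_{j'}$ with $\lambda_i\in\mathbb{K}$ chosen so that $|\lambda_i|\,s_{g_i}=\|u_i\|$, which yields $\|v_i\|_u=\|u_i\|$. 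Disjointness of supports implies $v_i$ is orthogonal in $E_u$ to the span of the approximations, and a standard strong-triangle argument (in the spirit of Lemmas~\ref{L-ort} and~\ref{nowy}) then transfers this to $v_i\perp W_i$ exactly in $\widehat{E_u}$, using that the approximating subspace lies within relative distance $t<1$ of $W_i$.

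Define $\tilde{j}:X+F_Y\to\widehat{E_u}$ by $\tilde{j}|_X=j$ and $\tilde{j}(u_i)=v_i$ for $k<i\le m$. By construction $[v_{k+1},\dots,v_m]\perp j(X_0)$, and the same strong-triangle argument used in Lemma~\ref{nowy}, applied to the immediate extension $j(X_0)\subset j(X)$, gives $[v_{k+1},\dots,v_m]\perp j(X)$; combined with $F_Y\perp X$ inside $Y$, this shows $\tilde{j}$ is an isometry. Finally, $Y$ is an immediate extension of $X+F_Y$ and $\widehat{E_u}$ is spherically complete, so Lemma~\ref{ext-isom} extends $\tilde{j}$ to an isometry $f:Y\to\widehat{E_u}$; then $f|_X=j$, the hypothesis of Theorem~\ref{Pro-iso} holds trivially, and $\widehat{E_u}$ is of universal disposition for $\mathcal{U}_{FNA}$. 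The main obstacle is the inductive step: when $\mathbb{K}$ is not spherically complete one cannot use Hahn--Banach/Ingleton to orthocomplement a finite-dimensional subspace abstractly, so the proof must exploit the explicit basis of $E_u$, which by design contains infinitely many basis vectors in every coset of $|\mathbb{K}^*|$ in $(0,\infty)$.
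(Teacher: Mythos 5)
Your proof is correct and follows essentially the same route as the paper: split $Y$ along a maximal orthogonal set extending one of $X$, use Lemma~\ref{nowy} to get $F_Y\perp X$, send the new orthogonal directions to suitably scaled unit vectors of $E_u$, and invoke Lemma~\ref{ext-isom} with the spherical completeness of $\widehat{E_u}$ to absorb the remaining immediate extension. You even supply a detail the paper leaves implicit, namely why a unit vector $e_{j'}$ of the right norm orthogonal to the finitely many previously chosen vectors of $\widehat{E_u}$ exists (finite-support approximation within relative error $t<1$ plus the perturbation argument of Lemma~\ref{L-ort}); the final appeal to Theorem~\ref{Pro-iso} is harmless but unnecessary since you achieve $f|_X=j$ exactly.
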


\begin{proof}
Denote $F:=\widehat{E_{u}}$. Let $X\subset F$ and let $j:X\rightarrow Y$ $\ $%
be an isometric embedding, where $Y$ is a finite non-archimedean normed
space. We prove, that there exists an isometric embedding $f:Y\rightarrow F$
such that $f\left( j\left( x\right) \right) =x$ for all $x\in X$.

Choose a maximal orthogonal set $\left\{
u_{1},...,u_{m_{X}},...,u_{m_{Y}}\right\} $ in $Y$ such that $\left\{
u_{1},...,u_{m_{X}}\right\} $ is a maximal orthogonal set in $j(X)$ for some 
$m_{Y}\geq m_{X}\geq 1$. Set $F_{Y}:=\left[ u_{m_{X}+1},...,u_{m_{Y}}\right] 
$. By Lemma \ref{nowy}\ we get $F_{Y}\perp j(X)$. \ 

Set $v_{k}=f\left( u_{k}\right) :=j^{-1}\left( u_{k}\right) $ for each $k\in
\{1,...,m_{X}\}$. For every $k\in \left\{ m_{X+1},...,m_{Y}\right\} $ choose 
$i_{k}\in I$ \ such that $\left\vert \left\vert e_{i_{k}}\right\vert
\right\vert _{u}=\left\vert \left\vert \lambda _{k}u_{k}\right\vert
\right\vert $ for some $\lambda _{k}\in \mathbb{K}$ and 
\begin{equation*}
e_{i_{k}}\perp \lbrack v_{1},...,v_{m_{X}},e_{i_{m_{X+1}}},...,e_{i_{k-1}}].
\end{equation*}%
Next set $f\left( u_{k}\right) :=e_{i_{k}}$ for $k=m_{X+1},...,m_{Y}$.
Define $f:j\left( X\right) +F_{Y}\rightarrow F$. Clearly $f$ is an isometry
and $f\left( j\left( X\right) +F_{Y}\right) \subset E_{h}^{0}$.

If $\mathbb{K}$ is spherically complete, we are done, as $\left\{
u_{1},...,u_{m_{Y}}\right\} $ is an orthogonal base of $Y$ by \cite[Lemma
5.5 and Theorem 5.15]{Rooij}; hence, $f$ is a required isometry defined on $%
Y.$ If $\mathbb{K}$ is non-spherically complete and $j\left( X\right)
+F_{Y}\neq Y$, then, by \cite[Proposition 2.1]{hilb}, $Y$ is an immediate
extension of $j\left( X\right) +F_{Y}$. Now, using Lemma \ref{ext-isom}, we
extend $f$ to the isometry defined on $Y$.
\end{proof}

The last part of the proof of Proposition \ref{Prop-ud} uses Lemma \ref%
{ext-isom} for the spherical completeness of the considered space $F$. In
fact it is enough to assume that $F$ contains a spherical completion of its
every finite-dimensional linear subspace. This observation suggests another
construction.

For each $g\in G$ set $I_{g}=\left\{ i_{g,1},i_{g,2},...\right\} $ (note
that $I_{g}$ is countable). For every $n\in \mathbb{N}$ set $F_{g}^{n}:=%
\left[ e_{i_{g,1}},...,e_{i_{g,n}}\right] ,$\ a finite-dimensional linear
subspace of $c_{0}\left( I_{u}\right) $ spanned by appropriate unit vectors.
By $\widehat{F_{g}^{n}}$ denote a spherical completion of $F_{g}^{n}$ such
that for fixed $g\in G$ we have $F_{g}^{n}\subset \widehat{F_{g}^{n}}\subset 
\widehat{c_{0}\left( I_{g}\right) }$ and $\widehat{F_{g}^{n-1}}\subset 
\widehat{F_{g}^{n}}$ if $n>1$.

Next, for every $g\in G$ define $F_{g}:=\bigcup_{n}\widehat{F_{g}^{n}}$. Let 
$E_{h}:=\bigoplus_{g\in G}\overline{F_{g}}.$

We are in a position to prove Theorem \ref{Eh}.

\begin{theorem}
\label{Eh}The space $E_{h}$ is of universal disposition for the class $%
\mathcal{U}_{FNA}$.
\end{theorem}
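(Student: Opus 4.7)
The proof parallels that of Theorem \ref{Prop-ud}, with $\widehat{E_u}$ replaced by $E_h$. As noted in the remark preceding this theorem, the task reduces to verifying that $E_h$ contains a spherical completion of each of its finite-dimensional subspaces; the construction $E_h=\bigoplus_{g\in G}\overline{F_g}$ with $F_g=\bigcup_n\widehat{F_g^n}$---a nested union of spherically complete subspaces of $\widehat{c_0(I_g)}$---is designed to supply exactly this.

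Given $X\subset E_h$ and an isometric embedding $j:X\to Y$ with $Y\in\mathcal{U}_{FNA}$, I would follow the opening of the proof of Theorem \ref{Prop-ud}: choose a maximal orthogonal set $\{u_1,\ldots,u_{m_Y}\}$ of $Y$ extending a maximal orthogonal set $\{u_1,\ldots,u_{m_X}\}$ of $j(X)$, set $F_Y=[u_{m_X+1},\ldots,u_{m_Y}]$ (orthogonal to $j(X)$ by Lemma \ref{nowy}), and define $f$ on $j(X)$ by $f(j(x))=x$ and for $k>m_X$ by $f(u_k)=\lambda_k e_{i_k}\in E_h$, where $i_k\in I_{g_k}$ (the countably infinite partition piece associated with the coset $g_k$ of $\|u_k\|$) and $\lambda_k\in\mathbb{K}$ are chosen so that $\|\lambda_k e_{i_k}\|=\|u_k\|$ and $\lambda_k e_{i_k}$ is orthogonal in $E_h$ to all previously assigned images. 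The resulting $f:j(X)+F_Y\to E_h$ is an isometry, and if $\mathbb{K}$ is spherically complete then $Y=j(X)+F_Y$ by \cite[Lemma 5.5]{Rooij}, completing the proof in that case.

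Otherwise $Y$ is an immediate extension of $j(X)+F_Y$ by \cite[Proposition 2.1]{hilb}, and I invoke Lemma \ref{ext-isom} with a spherically complete subspace $F^{\ast}\subset E_h$ containing $W:=f(j(X)+F_Y)$, producing the required isometry $f':Y\to F^{\ast}\subset E_h$. The main obstacle is producing $F^{\ast}$: the new vectors $f(u_k)=\lambda_k e_{i_k}$ involve only the finitely many cosets $g_k$ and lie inside corresponding spherically complete pieces $\widehat{F_{g_k}^{n_k}}\subset F_{g_k}$, but a generic $X\subset E_h=\bigoplus_g\overline{F_g}$ may have nontrivial projections onto infinitely many summands $\overline{F_g}$; one must then argue either that the resulting (possibly countable) orthogonal sum of the relevant spherically complete pieces still forms a spherically complete subspace of $E_h$, or that the embedding of $X$ can be controlled so as to involve only finitely many cosets. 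Once $F^{\ast}$ is in hand, Lemma \ref{ext-isom} yields the extension and completes the proof.
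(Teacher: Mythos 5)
There is a genuine gap, and it sits exactly where you locate ``the main obstacle'': you never produce the spherically complete subspace $F^{\ast}\subset E_{h}$ containing the given finite-dimensional subspace of $E_{h}$ (your $X$), and neither of your two suggested escapes works. An element of $E_{h}=\bigoplus_{g\in G}\overline{F_{g}}$ can have nonzero components in infinitely many summands, and even inside a single $\overline{F_{g}}=\overline{\bigcup_{n}\widehat{F_{g}^{n}}}$ it need not lie in any $\widehat{F_{g}^{n}}$, only in the closure. Your first escape fails because an infinite completed orthogonal sum of spherically complete spaces need not be spherically complete: the paper's Corollary \ref{Ehh} exhibits $E_{u}$, assembled from one-dimensional (hence spherically complete) pieces, as non-spherically complete. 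Your second escape is not available because universal disposition quantifies over \emph{all} finite-dimensional subspaces of $E_{h}$ and all isometric embeddings $j$, so there is nothing to ``control.''

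The missing idea is that one should not try to extend $j^{-1}$ exactly in the first step. The paper fixes $t\in(0,1)$, takes a $t$-orthogonal base $x_{1},\dots,x_{n_{X}}$ of $X$, and approximates each $z_{i}=j(x_{i})$ within $\frac{t}{2}\Vert z_{i}\Vert$ by a vector $w_{i}$ lying in a \emph{finite} direct sum $\bigoplus_{g\in G_{0}}\widehat{F_{g}^{n_{0}}}$; this is possible precisely because the $\widehat{F_{g}^{n}}$ are nested with dense union in each $\overline{F_{g}}$ and finitely supported vectors are dense in the completed direct sum. By Lemma \ref{L-ort} the assignment $f(x_{i}):=w_{i}$ defines an isometry of $X$ into a finite direct sum of spherically complete spaces, which \emph{is} spherically complete, and one checks $\Vert j-f|_{X}\Vert<1$. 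Your extension machinery (maximal orthogonal sets, Lemma \ref{nowy}, fresh unit vectors, Lemma \ref{ext-isom}) is then applied to this perturbed $f$ rather than to $j$, yielding an isometry $f:Y\to E_{h}$ with $\Vert j-f|_{X}\Vert<1$; finally Theorem \ref{Pro-iso} upgrades this approximate extension to an exact one. Without the perturbation-plus-Theorem-\ref{Pro-iso} step, which is the actual content of the paper's argument, your proof does not close.
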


\begin{proof}
Let $\ Y$ be a finite-dimensional non-archimedean normed space, let $%
X\subset Y,$ and let $j:X\rightarrow E_{h}$ be an isometric embedding. We
prove that there exists an isometry $f:Y\rightarrow E_{h}$ such that $%
\left\vert \left\vert j-f|_{X}\right\vert \right\vert <1$. Then if we apply
Theorem \ref{Pro-iso}, the proof will be finished.

Set $t\in (0,1)$ and choose a $t-$orthogonal base $\left\{
x_{1},...,x_{n_{X}}\right\} $ of $X$. Let $z_{i}:=j\left( x_{i}\right) ,$ $%
i=1,...,n_{X}$. Then for each $i\in \left\{ 1,...,n_{X}\right\} $ there
exists a finite $G_{i}\subset G$, say $G_{i}=\left\{ g_{1},...,g_{m}\right\} 
$, and 
\begin{equation*}
z_{i}^{\prime }\in \bigoplus_{g\in G_{i}}\overline{F_{g}}
\end{equation*}%
for which 
\begin{equation*}
\left\vert \left\vert z_{i}^{\prime }-z_{i}\right\vert \right\vert <\frac{t}{%
2}\left\vert \left\vert z_{i}\right\vert \right\vert .
\end{equation*}

Fix $i\in \left\{ 1,...,n_{X}\right\} $. Then we can write $z_{i}^{\prime
}=z_{i,1}^{\prime }+...+z_{i,m}^{\prime },$ where $z_{i,k}^{\prime }\in 
\overline{F_{g_{k}}},$ $k=1,..,m$. But, then we can select $n_{i}$ and $%
w_{i,1}^{\prime },...,w_{i,m}^{\prime }$ such that $w_{i,k}^{\prime }\in 
\widehat{F_{g_{k}}^{n_{i}}}$, and $||w_{i,k}^{\prime }-z_{i,k}^{\prime }||<%
\frac{t}{2}||z_{i,k}^{\prime }||,$ $k=1,..,m.$ Denote $w_{i}:=w_{i,1}^{%
\prime }+...+w_{i,m}^{\prime }$. Then 
\begin{multline*}
\left\vert \left\vert w_{i}-z_{i}\right\vert \right\vert =\left\vert
\left\vert w_{i}-z_{i}^{\prime }+z_{i}^{\prime }-z_{i}\right\vert
\right\vert \leq \max \{\left\vert \left\vert w_{i}-z_{i}^{\prime
}\right\vert \right\vert ,\left\vert \left\vert z_{i}^{\prime
}-z_{i}\right\vert \right\vert \} \\
\leq \max \{\max_{k=1,...,m}\left\vert \left\vert w_{i,k}^{\prime
}-z_{i,k}^{\prime }\right\vert \right\vert ,\left\vert \left\vert
z_{i}^{\prime }-z_{i}\right\vert \right\vert \}<\frac{t}{2}\left\vert
\left\vert z_{i}\right\vert \right\vert .
\end{multline*}%
Hence, by Lemma \ref{L-ort}, $\left\{ w_{i}:i\in \left\{ 1,...,n_{X}\right\}
\right\} $\ is a $t-$orthogonal set in $\bigoplus_{g\in G_{0}}\widehat{%
F_{g}^{n_{0}}},$ where $G_{0}=G_{1}\cup ...\cup G_{n_{X}}$ and $n_{0}=\max
\left\{ n_{i}:i\in \left\{ 1,...,n_{X}\right\} \right\} $.

Define the map $f:X\rightarrow \bigoplus_{g\in G_{0}}\widehat{F_{g}^{n_{0}}}%
\subset E_{h}$ setting $f\left( x_{i}\right) :=w_{i},$ $i=1,...,n_{X}$. Then
for all $\lambda _{i}\in \mathbb{K}$ $\left( i=1,...,n_{X}\right) $ we have 
\begin{equation*}
f:\sum_{i=1}^{n_{X}}\lambda _{i}x_{i}\longmapsto \sum_{i=1}^{n_{X}}\lambda
_{i}w_{i}.
\end{equation*}
Consequently 
\begin{multline*}
\left\vert \left\vert f(\sum_{i=1}^{n_{X}}\lambda _{i}x_{i})\right\vert
\right\vert =\left\vert \left\vert \sum_{i=1}^{n_{X}}\lambda
_{i}w_{i}-\sum_{i=1}^{n_{X}}\lambda _{i}z_{i}+\sum_{i=1}^{n_{X}}\lambda
_{i}z_{i}\right\vert \right\vert =\left\vert \left\vert
\sum_{i=1}^{n_{X}}\lambda _{i}(w_{i}-z_{i})+\sum_{i=1}^{n_{X}}\lambda
_{i}z_{i}\right\vert \right\vert \\
=\left\vert \left\vert \sum_{i=1}^{n_{X}}\lambda _{i}z_{i}\right\vert
\right\vert =\left\vert \left\vert \sum_{i=1}^{n_{X}}\lambda
_{i}j(x_{i})\right\vert \right\vert =\left\vert \left\vert
\sum_{i=1}^{n_{X}}\lambda _{i}x_{i}\right\vert \right\vert
\end{multline*}%
since 
\begin{equation*}
\left\vert \left\vert \sum_{i=1}^{n_{X}}\lambda _{i}(w_{i}-z_{i})\right\vert
\right\vert <\frac{t}{2}\max_{i=1,...,n_{X}}\left\vert \left\vert \lambda
_{i}z_{i}\right\vert \right\vert
\end{equation*}%
and, as $\left\{ z_{1},...,z_{n_{X}}\right\} $ is $t-$orthogonal, we have 
\begin{equation*}
\left\vert \left\vert \sum_{i=1}^{n_{X}}\lambda _{i}z_{i}\right\vert
\right\vert \geq t\cdot \max_{i=1,...,n_{X}}\left\vert \left\vert \lambda
_{i}z_{i}\right\vert \right\vert .
\end{equation*}%
Thus $f$ is isometric. Observe that 
\begin{subequations}
\label{a11}
\begin{multline}
\left\vert \left\vert f-j\right\vert \right\vert =\sup_{x\in X}\frac{%
\left\vert \left\vert \left( f-j\right) \left( x\right) \right\vert
\right\vert }{\left\vert \left\vert x\right\vert \right\vert }=\sup_{\lambda
_{i}\in \mathbb{K}\text{ }\left( i=1,...,n_{X}\right) }\frac{\left\vert
\left\vert \sum_{i=1}^{n_{X}}\lambda _{i}\left( f-j\right)
(x_{i})\right\vert \right\vert }{\left\vert \left\vert
\sum_{i=1}^{n_{X}}\lambda _{i}x_{i}\right\vert \right\vert } \\
\leq \sup_{\lambda _{i}\in \mathbb{K}\text{ }\left( i=1,...,n_{X}\right) }%
\frac{\max_{i=1,...,n_{X}}\left\vert \left\vert \lambda _{i}\left(
w_{i}-z_{i}\right) \right\vert \right\vert }{t\cdot
\max_{i=1,...,n_{X}}\left\vert \left\vert \lambda _{i}x_{i}\right\vert
\right\vert }  \notag \\
\leq \sup_{\lambda _{i}\in \mathbb{K}\text{ }\left( i=1,...,n_{X}\right) }%
\frac{\frac{t}{2}\cdot \max_{i=1,...,n_{X}}\left\vert \left\vert \lambda
_{i}x_{i}\right\vert \right\vert }{t\cdot \max_{i=1,...,n_{X}}\left\vert
\left\vert \lambda _{i}x_{i}\right\vert \right\vert }<1.
\end{multline}

Now, we extend $f$ on $Y.$ We argue similarly as in the proof of the
previous theorem. Choose $\left\{ u_{1},...,u_{m_{X}},...,u_{m_{Y}}\right\} $%
, a maximal orthogonal set in $Y$ such that $\left\{
u_{1},...,u_{m_{X}}\right\} $ is a maximal orthogonal set in $X$. \ By Lemma %
\ref{nowy}\ we get $F_{Y}\perp j(X)$, where $F_{Y}=\left[
u_{m_{X}+1},...,u_{m_{Y}}\right] $.

Denote $v_{k}:=f\left( u_{k}\right) $ for $k=1,...,m_{X}$, and for each $%
k\in \left\{ m_{X+1},...,m_{Y}\right\} $ choose $g_{k}\in G$ and $%
i_{g_{k},n_{k}}\in I_{g_{k}},$ denoting for simplicity $%
j_{k}:=i_{g_{k},n_{k}}$,\ such that $\left\vert \left\vert
e_{j_{k}}\right\vert \right\vert _{u}=\left\vert \left\vert \lambda
_{k}u_{k}\right\vert \right\vert $ (for some $\lambda _{k}\in \mathbb{K}$)
and 
\end{subequations}
\begin{equation*}
e_{j_{k}}\perp \lbrack v_{1},...,v_{m_{X}},e_{i_{m_{X+1}}},...,e_{j_{k-1}}].
\end{equation*}%
Then, setting $f\left( u_{k}\right) :=e_{j_{k}},$ where $k=n_{X+1},...,n_{Y}$%
, we extend $f$ on $X+F_{Y}$. Let $p_{0}=\max \left\{
n_{k}:k=m_{X+1},...,m_{Y}\right\} $ and $G_{1}=\left\{
g_{k}:k=m_{X+1},...,m_{Y}\right\} $. The map $f$ is an isometry and $f\left(
X+F_{Y}\right) \subset H$, where 
\begin{equation*}
H=\bigoplus_{g\in G_{0}\cup G_{1}}\widehat{F_{g}^{\max \{n_{0},p_{0}\}}}.
\end{equation*}

If $\mathbb{K}$ is spherically complete, the proof is completed, since $%
\left\{ u_{1},...,u_{n_{Y}}\right\} $ is an orthogonal base of $Y$ by \cite[%
Lemma 5.5 and Theorem 5.15]{Rooij}. This shows that $f$ is a required
isometry defined on $Y.$ Assume that $\mathbb{K}$ is non-spherically
complete. Since $Y$ is an immediate extension of $X+F_{Y}$ and $H$ is
spherically complete as a finite direct sum of spherically complete spaces
(see \cite[4A]{Rooij}), we apply Lemma \ref{ext-isom} to extend $f$ to the
isometry defined on $Y$. Since $\left\vert \left\vert j-f|_{X}\right\vert
\right\vert <1$, as we proved above, we apply Theorem \ref{Pro-iso}. The
proof is finished.
\end{proof}

This yields the following interesting

\begin{corollary}
\label{Ehh} $\left( 1\right) $ If $\mathbb{K}$ is spherically complete and $%
\left( 0,\infty \right) $ is the union of at most countably many cosets of $%
\left\vert \mathbb{K}^{\ast }\right\vert $, the isometrically universal
space $E_{h}$ for the class of non-archimedean Banach spaces of countable
type is also of universal disposition for the class $\mathcal{U}_{FNA}$.

$\left( 2\right) $ There exist non-archimedean Banach spaces of universal
disposition for the class $\mathcal{U}_{FNA}$ which are not isometrically
isomorphic.
\end{corollary}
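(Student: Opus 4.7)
For part (1), my plan is to derive both properties essentially for free from what has already been established.

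First I would show that under the given hypothesis $E_h=E_u$. Recall that $E_h=\bigoplus_{g\in G}\overline{F_g}$ where $F_g=\bigcup_n\widehat{F_g^n}$. When $\mathbb{K}$ is spherically complete every finite-dimensional normed space is spherically complete (for instance by Ingleton's theorem or \cite[Lemma 5.5]{Rooij}), so $\widehat{F_g^n}=F_g^n$ and hence $F_g=\bigcup_n[e_{i_{g,1}},\dots,e_{i_{g,n}}]$, whose closure in $E_u$ with the norm $\|\cdot\|_u$ is exactly $c_0(I_g)$ (equipped with the scaled norm $s_g|\cdot|$). Summing over $g$ yields $E_h=c_0(I_u)=E_u$. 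Then Proposition~\ref{P-univers} (using the countable-cosets hypothesis) gives that $E_u$ is of countable type and isometrically universal for the class of non-archimedean Banach spaces of countable type, while Theorem~\ref{Eh} gives that $E_h$ is of universal disposition for $\mathcal{U}_{FNA}$. Combining yields (1).

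For part (2), my plan is to exhibit two concrete spaces of universal disposition and separate them by a preserved invariant. Working over any spherically complete $\mathbb{K}$ such that $(0,\infty)$ is the union of at most countably many cosets of $|\mathbb{K}^*|$ (for example $\mathbb{K}$ with $|\mathbb{K}^*|=(0,\infty)$ so that $E_u\cong c_0$), I would take the two candidates to be $E_h$ from Theorem~\ref{Eh} and $\widehat{E_u}$ from Theorem~\ref{Prop-ud}. Both are of universal disposition for $\mathcal{U}_{FNA}$ by those theorems. To see they are not isometrically isomorphic, observe that $\widehat{E_u}$ is spherically complete by definition of spherical completion, whereas $E_h=E_u$ (by the reduction in part (1)) is isometric to a $c_0$-type space, which is not spherically complete (a standard shrinking sequence of balls in $c_0$ has empty intersection). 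Since spherical completeness is clearly preserved by linear isometric isomorphisms, $E_h$ and $\widehat{E_u}$ cannot be isometrically isomorphic.

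The two steps are essentially routine given the apparatus already assembled: the only slightly delicate point is identifying $\overline{F_g}$ with $c_0(I_g)$ in part~(1), and the observation that a $c_0$-type space over an infinite-dimensional valued field fails to be spherically complete in part~(2). Neither presents a genuine obstacle: the first is a direct consequence of the definitions once spherical completeness of each $F_g^n$ is invoked, and the second is a classical fact about non-archimedean $c_0$ (if needed, one exhibits a shrinking sequence of balls centred at partial sums $\sum_{k\le n}e_k$ with radii $\to 0$ in the appropriate scaled norm, which fails to have common intersection inside $c_0$).
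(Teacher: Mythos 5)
Your argument follows essentially the same route as the paper's: in (1) you identify $E_h$ with $E_u$ via spherical completeness of finite-dimensional spaces over a spherically complete $\mathbb{K}$ and then quote Proposition~\ref{P-univers} and Theorem~\ref{Eh}; in (2) you separate $E_h=E_u$ from $\widehat{E_u}$ by the isometric invariant of spherical completeness, which is exactly the paper's argument.

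Two details in (2) need repair, both concerning the claim that the $c_0$-type space is not spherically complete. First, your parenthetical construction of a shrinking sequence of balls ``with radii $\to 0$'' cannot work: $c_0$ is norm-complete, so nested closed balls with radii tending to $0$ have Cauchy centres and hence \emph{nonempty} intersection. The radii must decrease to a strictly positive limit; the paper takes centres $x_n=\sum_{k\le n}e_{i_k}$ with $1\ge\Vert e_{i_1}\Vert_u>\Vert e_{i_2}\Vert_u>\dots>\frac12$ and radii $\Vert e_{i_{n+1}}\Vert_u$, and the intersection is empty because any point of it would have infinitely many coordinates of norm exceeding $\frac12$. Second, ``$c_0$ is not spherically complete'' is not a field-independent classical fact: over a discretely valued $\mathbb{K}$ the norm values of $c_0$ lie in the discrete set $\vert\mathbb{K}^*\vert\cup\{0\}$, the (adjusted) radii of any shrinking sequence bounded away from $0$ stabilize, and $c_0$ \emph{is} spherically complete. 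Your example is saved because you restrict to $\vert\mathbb{K}^*\vert=(0,\infty)$, which is densely valued; the paper needs no such restriction because $\Vert E_u\Vert_u$ is dense in $(0,\infty)$ by the very construction of $E_u$ (the representatives $s_g$ fill out a fundamental domain), and that density is what actually makes the strictly decreasing sequence of norms $\Vert e_{i_k}\Vert_u$ available. With the radii corrected and the density of the norm set made explicit, your proof is complete and coincides with the paper's.
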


\begin{proof}
Recall that if $\mathbb{K}$ is spherically complete, then every
finite-dimensional normed space over $\mathbb{K}$ is spherically complete
(see \cite[Theorem 4.2 and Corollary 4.6 ]{Rooij}). Thus, $E_{h}=\left(
c_{0}\left( I_{u}\right) ,\left\vert \left\vert .\right\vert \right\vert
_{u}\right) =E_{u}$. The remaining part of the proof of $\left( 1\right) $
follows from Proposition \ref{P-univers}. To prove $\left( 2\right) $
consider the spaces $E_{h}$ and $\widehat{E_{u}}$ (by Theorems \ref{Prop-ud}
and \ref{Eh} both are spaces of universal disposition for the class $%
\mathcal{U}_{FNA}$) assuming that $\mathbb{K}$ is spherically complete. Then 
$E_{h}=E_{u}$. Since $\left\vert \left\vert E_{u}^{\ast }\right\vert
\right\vert _{u}$ is a dense subset of $\left( 0,\infty \right) $, we can
choose $\left\{ i_{1},i_{2},...\right\} \subset I_{u}$ such that \ $1\geq
\left\vert \left\vert e_{i_{1}}\right\vert \right\vert _{u}>\left\vert
\left\vert e_{i_{2}}\right\vert \right\vert _{u}>...>\frac{1}{2}$ (where $%
e_{i_{k}}$ are unit vectors of $E_{u}$). Set $x_{n}:=\sum_{k=1}^{n}e_{i_{k}}$
$\left( n\in \mathbb{N}\right) $. Then, the balls $V_{n}:=\left\{ x\in
E_{u} :\left\vert \left\vert x-x_{n}\right\vert
\right\vert _{u}\leq \left\vert \left\vert e_{i_{n+1}}\right\vert
\right\vert _{u}\right\} $\ form a shrinking sequence in $E_{u}$.
But $\bigcap_{n\in \mathbb{N}}V_{n}=\varnothing $; hence, $E_{u}$ is not
spherically complete. Clearly $\widehat{E_{u}}$, as a spherical completion
of $E_{u}$, is spherically complete, thus we imply that $E_{h}$ (=$E_{u}$)
and $\widehat{E_{u}}$ \ are not isometrically isomorphic.
\end{proof}

\begin{remark}
\label{rem-ud}Note that (see Remark \ref{rem-univ}) if $\mathbb{K}$ is
non-spherically complete then the space $\ell^{\infty }$ is not of universal
disposition for the class $\mathcal{U}_{FNA}$. Indeed, take $Y=\mathbb{K}%
_{v}^{2}$, $e_{1}=\left( 1,0,0,...\right) \in \ell^{\infty }$ and define the
isometric embedding $i:\left[ e_{1}\right] \rightarrow \mathbb{K}%
_{v}^{2}:e_{1}\longmapsto \left( 1,0\right) $. On the other hand, by \cite[Proposition 3.2]%
{AK-2011-linf}, every two-dimensional linear subspace of $\ell^{\infty }$ containing $e_{1}$ has two non-zero orthogonal
elements. Thus, there is no isometric embedding $f:Y\rightarrow \ell^{\infty }$
such that $f(1,0) = e_{1}$.
\end{remark}

\end{document}